\numberwithin{equation}{section}
\theoremstyle{plain}
\newtheorem{theorem}{Theorem}[section]
\newtheorem{lemma}{Lemma}[section]
\newtheorem{remark}{Remark}[section]
\def\ud{\, \mathrm{d}}
\def\E{\mathbb E}
\def\F{\mathcal F}
\begin{document}



\title{Pricing the zero-coupon bond of the extended Cox-Ingersoll-Ross model using Malliavin calculus}

\date{}

\author{Hongyi Chen\thanks{Fordham University. Email:\ hchen102@fordham.edu} \ Sixian Jin\thanks{Corresponding author. Fordham University. Email:\ sjin27@fordham.edu} \ and Di Kang\thanks{McMaster University. Email:\ kangd18@mcmaster.ca}}


\maketitle



\begin{abstract}
In this paper, we price the zero-coupon bond of the extended Cox-Ingersoll-Ross model by a Dyson type formula established in one of the authors' paper Jin, Peng and Schelllhorn (2016) using Malliavin calculus. This formula provides a fast convergent series to represent the bond price, and it depends on the given drift and volatility of the interest rate process but not the instantaneous forward rate used in Maghsoodi (1996). This expression can be also regarded as a new solution to a class of Reccati equations with time-dependent coefficients.\\
\\
\textbf{Keywords:} Cox-Ingersoll-Ross model, Zero-coupon bond, Bond pricing formula, Malliavin calculus, Dyson type formula.\\
\textbf{MSC(2010)}: 60H07, 60G15, 91G20, 91G30
\end{abstract}




\section{Introduction}

The Cox-Ingersoll-Ross (CIR) process is a short rate model used in the
pricing of interest rate derivatives $r$ and is given by the following It\^o-type
stochastic differential equation (SDE),
\begin{equation}
\label{First}
\mathrm{d}r_s=a\left(r_s+b\right)\mathrm{d}
s+\sigma\sqrt{r_s}\mathrm{d}W_s,
\end{equation}
where $W$ is a standard Brownian motion and $a, b,$ and $\sigma$ are positive constants. This model was first introduced in \cite{CIR}, and popularized due to the use of volatility modeling by Heston in \cite{Hes93}. Nowadays, due to its desirable
properties, such as non-negativity, mean-reversion and analytical tractability, it plays a key role in the field of option pricing, for instance when modeling squared volatilities in the Heston model \cite{Hes93}.  For practitioners, however, the main shortcoming of the constant parameters version of the model \eqref{First} is that it cannot reproduce the original term structure of interest rates. This fact was highlighted in several papers such as \cite{HW90,KS08,Yang06}.

In order to adapt the CIR model to be more consistent with the current term
structure of interest rates, Hull and White \cite{HW90} extended the CIR model to the one with time-dependent coefficients as 
\begin{equation}
\label{Second}
\mathrm{d}r_s=k(s)(\theta(s)-r_s)\mathrm{d}
s+\sigma(s)\sqrt{r_s}\mathrm{d}W_s, 
\end{equation}
where $k, \theta$ and $\sigma$ are nonrandom one-variable functions. It is simply
defined by the diffusion process, and a unique strong solution to this process exists without lying on a very strong assumption, see \cite{HW90}. This model is called the extended  CIR (ECIR) model, which is frequently used to explain the behavior of term structure of interest rates, especially prices of zero-coupon bonds at any time $t$ for maturity date $T\ge t$, which is $$P(t,T):=\E\left[\exp{\left(-\int_t^Tr_s\ud s\right)}\bigg|\F_t\right].$$

However, the general bond price is very hard to calculate analytically. In \cite{Mag96}, Maghsoodi presented the interest rate in ECIR model as a sum of squares of Ornstein-Uhlenbeck processes when the dimension $d(t):=4k(t)\theta(t)/\sigma^2(t)$ is a fixed positive integer, thus it follows the generalized chi-square distribution.  A closed form for the interest rate when $d$ is a positive real value is also given in \cite{Mag96}. Then, the term structure dynamics of the ECIR model with a closed form of the discount bond price is given as $$P(t,T)=A(t,T)e^{-B(t,T)r_t},$$
where $B(t,T)$ is a solution to a time-dependent Riccati equation, and $A(t,T)$ can be obtained from $B(t,T)$. In \cite{Mag96}, the expression of $B(t,T)$ depends on the initial boundary condition $B(0,s)$ with $s\in[t,T]$, which is still unknown. Remark 3.1 of \cite{Mag96} gives a way to estimate $B(0,s)$ using historical data with the instantaneous forward rate being supposed to be known, see \cite{HW90} also.  

In our paper, we apply a Dyson type formula established in \cite{JPS16} via Malliavin calculus to calculate $P(t,T)$ to an analytical series for positive integer dimension. The main advantage of our expression is that we do not need the prior information of the initial boundary condition $B(0,s)$. This expression can also be understood as a new solution to a time-dependent Riccati equation. In practice, comparing to the Monte-Carlo simulation which requires a lot of computation time (see \cite{HM05} for example), our expression is a rapidly convergent series and the first few terms involve only simple multiple integrals of nonrandom functions which are time-saving in numerical or manual calculations. 

We organize this paper as the following. We first introduce some basic concepts and rules of Malliavin calculus and the Dyson type formula in section 2. In section 3, we state our main results of the analytical expressions for the bond price. In section 4, we provide some numerical examples. Finally, Appendix collects the proofs of technical lemmas.

\section{Notations and Preliminaries}
In this section, we briefly review some basic Malliavin calculus and
introduce the Dyson type formula for a Brownian martingale established in \cite{JPS16}. We denote by $%
(\Omega ,\mathcal{F},\mathcal{\{F}_{t}\},\mathbb{P})$ the complete filtered
probability space, where the filtration $\mathcal{\{F}_{t}\}$ satisfies the
usual conditions, i.e., it is the usual augmentation of the filtration
generated by Brownian motion $W$ on $\mathbb{R}$. Unless stated otherwise all equations involving random variables are to be regarded to
hold $\mathbb{P}$-almost surely.

Following \cite{Oks}, we say that a real function $g:[0,T]\rightarrow
\mathbb{R}^{n}$ is symmetric if:
\begin{equation*}
g(x_{\sigma (1)},\ldots,x_{\sigma(n)})=g(x_{1},\ldots,x_{n}),
\end{equation*}
for all permutations $\sigma $ on $\{1,2,..,n\}$. If in addition, $g\in
L^2([0,T]^n)$, i.e.,
\begin{equation*}
||g||_{L^{2}([0,T]^{n})}^{2}=\int_{0}^{T}\ldots
\int_{0}^{T}g^{2}(x_{1},\ldots,x_{n})\, \mathrm{d} x_{1}\ldots\, \mathrm{d}
x_{n}<+\infty,
\end{equation*}
then we say $g$ belongs to $\hat{L}^{2}([0,T]^{n})$, the space of symmetric
square-integrable functions on $[0,T]^{n}$. Denote by $L^2(\Omega)$ the
space of random variables with finite variances, i.e., 
\begin{equation*}
\|F\|_{L^2(\Omega)}:=\sqrt{\E[F^2]}<+\infty.
\end{equation*}
The Wiener chaos expansion of $F\in L^2(\Omega)$, is defined by
\begin{equation}
F=\sum_{m=0}^{\infty }I_{m}(f_{m})\text{ \ \ in }L^{2}(\Omega ),
\label{WienerChaos}
\end{equation}
where $\{f_{m}\}_{m=0}^{\infty }$ is a uniquely determined sequence of
nonrandom functions ($m$-dimensional kernels) in $\hat{L}^{2}([0,T]^{n})$, and the operator $I_m:\hat{L}^{2}([0,T]^{n})\rightarrow L^2(\Omega)$ is
defined as
\begin{equation*}
\left\{
\begin{array}{lll}
I_{m}(f_{m}) & := & \int_{0}^{T}\int_{0}^{t_{m}}...
\int_{0}^{t_{2}}f_m(t_{1},...,t_{m})\, \mathrm{d} W_{t_{1}}\, \mathrm{d}
W_{t_{2}}...\, \mathrm{d} W_{t_{m}},~\text{ if }m>0; \\
I_{0}(f_{0}) & := & f_{0}.
\end{array}
\right.
\end{equation*}

Following Lemma 4.16 in \cite{Nua}, the Malliavin derivative $D_{t}F$ of $F$, when it exists, satisfies
\begin{equation*}
D_{t}F=\sum_{m=0}^{\infty }mI_{m-1}(f_{m}(\cdot,t))\text{ \ \ in }
L^{2}(\Omega).
\end{equation*}
We denote the Malliavin derivative of order $l$ of $F$ at time $t$ by $
D_{t}^{l}F$, as an abbreviation of $\underbrace{D_t\ldots D_t}_{l~
\mbox{times}}F$. We call $\mathbb{D}^{\infty }([0,T])$ the set of random
variables which are infinitely Malliavin differentiable and $\mathcal{F}_{T}$
-measurable. A random variable is said to be infinitely Malliavin
differentiable if for any positive integer $n$,
\begin{equation}  \label{dinfty}
\E\left[ \left( \sup_{s_{1},\ldots ,s_{n}\in \lbrack 0,T]}\big|%
D_{s_{n}}\ldots D_{s_{1}}F\big|\right) ^{2}\right] <+\infty .
\end{equation}
In particular, we denote by $\mathbb{D}^N([0,T])$ the space of all random
variables $F$ which satisfy (\ref{dinfty}) for all $n\leq N$. The Malliavin derivative satisfies the usual product and chain rules:
\begin{enumerate}
\item Product rule: for any $t\ge 0$,
$$D_t(FG)=FD_tG+GD_tF.$$
\item Chain rule: if $f(x_1,...,x_n)$ is an $n$-variable differentiable function, then
$$
D_{s}f(W_{t_{1}},...,W_{t_{n}})=\sum_{i=1}^{n}\frac{\partial f}{\partial
x_{i}}(W_{t_{1}},...,W_{t_{n}})1_{[0,t_i]}(s).
$$
\end{enumerate}

Given $\omega \in \Omega $, we define the freezing path operator $\omega ^{t}$
as
\begin{equation*}
( \omega ^{t}\circ F)(\omega ):=F(\omega ^{t}(\omega )),
\end{equation*}
where $\omega ^{t}(\omega )$ represents a ``frozen path" - a
particular path where the corresponding Brownian motion becomes a constant
after time $t$. For example,
\begin{equation}  \label{defineomegat}
\omega^t\circ W_s(\omega)=W_s(\omega ^{t}(\omega ))=\left\{%
\begin{array}{l}
W_s(\omega )~\mbox{if}~ s\leq t; \\
W_t(\omega )~\mbox{if}~t\leq s\leq T.%
\end{array}%
\right.
\end{equation}
In the remainder of text, for convenience, we only write $\omega ^{t}$ instead of $\omega ^{t}(\omega )$, and $F$ instead of $F(\omega)$. We give in the following remark some examples for the freezing path operator, which can also be viewed as a constructive definition of this operator. For more details, we refer to Section 2.1 of \cite{JPS16}.
\begin{remark}
Let $t\le T$:
\begin{enumerate}
\item Suppose $%
F=p(W_{s_{1}},\ldots,W_{s_{n}})$, where $p$ is a polynomial. Then $\omega ^{t}\circ F=p(W_{s_{1}\wedge
t},\ldots,W_{s_{n}\wedge t})$. 
\item The following equations hold:%
\begin{eqnarray*}
&&\omega
^{t}\circ\left( \int_{0}^{T}f(s)\, \mathrm{d} W_s\right) =\int_{0}^{t}f(s)\, \mathrm{d} W_s; \\
&&\omega
^{t}\circ\left( \int_{0}^{T}W_s\, \mathrm{d} s\right) =\int_{0}^{t}W_s\, \mathrm{d} s+W_t(T-t).
\end{eqnarray*}%
\item Acting a freezing path operator on an It\^o integral is more complicated, for example,\begin{equation*}
\omega
^{t}\circ\left( \int_{0}^{T}W_s\, \mathrm{d} W_s\right)=\omega ^{t}\circ\left( \frac{%
W_T^{2}-T}{2}\right)=\frac{W_t^{2}-T}{2}.
\end{equation*} 
For a general case $\omega^t\circ I_m(f_m)$, we refer to Proposition 2 in \cite{JS17}. 

\item It is worth emphasizing the following fact to readers, that is
\begin{equation*}
\omega^t\circ (D_sF)\neq D_s\left(\omega^t\circ F\right).
\end{equation*}
For instance, if $t< s\leq T$, then $\omega^t\circ 
(D_{s}W_T^{2})=\omega^t\circ (2W_T)=2W_t$, but $D_s(\omega^t\circ W_T^2)=D_sW_t^2=0$. 
\end{enumerate}
\end{remark}

Now we state the Dyson type formula, which is Theorem 2.3 in \cite{JPS16}.
\begin{theorem}
\label{EFthm} 
Suppose that a random variable $F$ is $\mathcal{F}_T$-measurable and satisfies the following condition
\begin{equation}
\label{conv}
\frac{(T-t)^{2n}}{(2^{n}n!)^{2}}\E\left[ \left( \sup_{u_{1},...,u_{n}\in
\lbrack t,T]}\left\vert \omega^t \circ (D_{u_{n}}^{2}\ldots D_{u_{1}}^{2}F)\right\vert \right) ^{2}\right] \xrightarrow[]{n\rightarrow \infty}0
\end{equation}%
for a fixed $t\in \lbrack 0,T]$. Then in $L^2(\Omega)$,
\begin{equation}
 \label{EF}
\E[F|\mathcal{F}_{t}]=\sum_{n=0}^{+\infty }\frac{1}{2^{n}n!}%
\int_{[t,T]^{n}}\omega ^{t}\circ(D_{s_{n}}^{2}\ldots D_{s_{1}}^{2}F)\,\mathrm{d}%
s_{n}\ldots \,\mathrm{d}s_{1}. 
\end{equation}
\end{theorem}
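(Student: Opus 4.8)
The plan is to derive the series \eqref{EF} from the Wiener chaos expansion of $F$ together with the way the freezing path operator and the conditional expectation act on iterated It\^o integrals. Write $F=\sum_{m=0}^\infty I_m(f_m)$ as in \eqref{WienerChaos}. The two ingredients I would establish first are: (i) a formula for $\E[I_m(f_m)\mid\mathcal F_t]$, which by the adaptedness of the iterated integral and the tower property is simply the truncated iterated integral $\int_{0<t_1<\cdots<t_k\le t}\!\cdots\,f_m(t_1,\dots,t_k,t_{k+1},\dots,t_m)$ evaluated with the last $m-k$ arguments ``cut off'' — more precisely one shows $\E[I_m(f_m)\mid\mathcal F_t]$ is the chaos element whose kernel is $f_m$ restricted appropriately; and (ii) the effect of $\omega^t\circ D^{2}_{s}$ on $I_m(f_m)$. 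For (ii), recall $D_sI_m(f_m)=mI_{m-1}(f_m(\cdot,s))$, so $D^2_sI_m(f_m)=m(m-1)I_{m-2}(f_m(\cdot,s,s))$, and then applying $\omega^t$ — using the rule for $\omega^t\circ I_k$ from Proposition 2 of \cite{JS17} referenced in the remark — replaces the remaining stochastic integral by its ``frozen'' value. Iterating $n$ times and integrating $s_1,\dots,s_n$ over $[t,T]^n$ produces, after bookkeeping of the combinatorial factor $m(m-1)\cdots(m-2n+1)$ against $\tfrac1{2^n n!}$ and the volume factor from the $s$-integrations, exactly the $\mathcal F_t$-conditional expectation of the chaos component; summing over $n$ reconstructs $\E[F\mid\mathcal F_t]$.

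Concretely I would organize it as follows. Step 1: reduce to a single chaos $F=I_m(f_m)$ by linearity and an $L^2$-limit argument (justified at the end using \eqref{conv}). Step 2: compute $\omega^t\circ(D^2_{s_n}\cdots D^2_{s_1}I_m(f_m))$ in closed form; here the iterated Malliavin derivatives pull out the falling factorial and reduce the order of the chaos by $2n$, and $\omega^t$ turns the surviving $I_{m-2n}$ into an explicit $\mathcal F_t$-measurable quantity — for $2n\ge m$ one gets a deterministic object, and the cross-terms vanish by symmetry/orthogonality. Step 3: integrate over $s_1,\dots,s_n\in[t,T]$ and match the result, term by term in $n$, with the chaos decomposition of $\E[I_m(f_m)\mid\mathcal F_t]$ obtained in ingredient (i); this is where the precise constant $\tfrac1{2^n n!}$ is forced. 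Step 4: sum over $m$ and $n$ and interchange the order of summation, using \eqref{conv} to control the tail: the hypothesis says exactly that the $L^2$-norm of the $n$-th term, bounded by $\tfrac{(T-t)^{2n}}{(2^n n!)^2}\E[(\sup|\omega^t\circ D^{2n}F|)^2]$ after Cauchy--Schwarz on the $s$-integrals, tends to $0$, which both guarantees convergence of the series in $L^2(\Omega)$ and legitimizes the rearrangement.

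The main obstacle I expect is Step 2 — getting an honest, fully justified formula for $\omega^t\circ(D^2_{s_n}\cdots D^2_{s_1}F)$ and, in particular, the interaction between the non-commuting operators $\omega^t$ and $D_s$ emphasized in item 4 of the Remark. One must be careful that the freezing operator is applied only once, at the outermost level, after all $2n$ Malliavin derivatives have been taken, so the computation of $\omega^t\circ I_{m-2n}(\,\cdot\,)$ must use the correct pushforward-of-kernel description rather than naively freezing inside. A secondary technical point is the $L^2$ convergence in Step 1: since we are expanding $F$ in infinitely many chaoses and then differentiating $2n$ times, one needs the uniform-sup moment bounds packaged in $\mathbb D^\infty$ / condition \eqref{conv} to pass limits through $\omega^t\circ D^{2n}$. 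Everything else — the falling-factorial bookkeeping, the volume factors, and the final summation — is routine once Step 2 is in place.
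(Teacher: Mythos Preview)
The paper does not contain a proof of this theorem: it is quoted verbatim as ``Theorem~2.3 in \cite{JPS16}'' and used as a black box, so there is no in-paper argument to compare your proposal against. Any assessment would have to be made against the original proof in Jin, Peng and Schellhorn (2016), not against the present paper.

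That said, your outline is broadly in the spirit of how such formulas are typically established---reduce to a single Wiener chaos, compute iterated Malliavin derivatives (which lower the chaos order by two each time and throw off a falling factorial), apply the freezing operator via the known action on $I_k$, integrate over $[t,T]^n$, and match constants. The point you flag in Step~2 is indeed the crux: the interaction between $\omega^t$ and $D_s$ is delicate, and the correct identity to aim for is essentially a Stroock--Taylor type expansion of $\E[F\mid\mathcal F_t]$ in which the ``frozen'' iterated derivatives play the role of Taylor coefficients. Your Step~4 correctly identifies \eqref{conv} as the remainder estimate, but note that \eqref{conv} as stated controls the size of the $n$-th \emph{term}, not directly the \emph{tail} of the series; to get $L^2$ convergence of the partial sums you still need to argue that the partial sums form a Cauchy sequence, which follows once you show the $n$-th term has $L^2$-norm bounded by the square root of the quantity in \eqref{conv}. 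Whether this matches the argument in \cite{JPS16} in its details cannot be determined from the present paper.
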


\section{Zero-Coupon Bond Price of the Extended CIR Model}

In this section, we use Theorem \ref{EFthm} to price the zero-coupon bond of the extended CIR model defined by
\begin{equation}
\label{CIR}
\, \mathrm{d} r_s=2k(s)\left(\theta(s)-r_s\right) \, \mathrm{d}
s+2\sigma (s)\sqrt{r_s}\, \mathrm{d} W_s,
\end{equation}%
where the initial value is a constant $r_0$, $k(s)$ and $\sigma (s)$ are nonrandom functions which satisfy $d=2k(t)\theta(t)/\sigma^2(t)$ being a fixed positive integer. Note that this matches \eqref{Second} with constant multiplying. 

By Theorem 2.1 of \cite{Mag96}, the interest
rate $\{r_s\}_{s\ge0}$ can be represented as
\begin{equation}  \label{rel}
\{r_s\}_{s\ge0}=\left\{\sum_{i=1}^{d}(X^{(i)}_s)^2\right\}_{s\ge0},
\end{equation}%
where $\{X^{(i)}\}_i$ are i.i.d. Ornstein-Uhlenbeck processes defined by
\begin{equation}
\label{OUprocess}
\, \mathrm{d} X^{(i)}_s=-k(s)X^{(i)}_s\, \mathrm{d} s+\sigma (s)\, \mathrm{d}
W^{(i)}_s,
\end{equation}%
with $X^{(i)}_{0}=x_0=\sqrt{\frac{r_{0}}{d}}$ and $W^{(i)}\in(\Omega,\{\mathcal F_t^{(i)}\}_{t\ge0},\mathbb{P})$ being independent standard Brownian motions for all $i=1,\ldots,d$. Thus $X^{(i)}$ has an explicit expression 
\begin{equation}\label{Xi}
X^{(i)}_t=x_0e^{-\int_0^tk(u)\ud u}+\int_0^te^{-\int_u^tk(r)\ud r}\sigma(u)\ud W^{(i)}_u.
\end{equation}
Let
\begin{equation*}
F=\exp \Big( -\int_{t}^{T}r(s)\, \mathrm{d} s\Big) .
\end{equation*}%
Our goal is to find the bond price $P(t,T)=\E[F|\mathcal{F}_{t}]$ for any fixed $t\in[0,T]$. Since $F$ can be
written as $F=\prod_{i=1}^{d}F^{(i)}$ where $F^{(i)}:=\exp (
-\int_{t}^{T}(X^{(i)}_s)^{2}\, \mathrm{d} s) $, we can decompose $P(t,T)$ into a product of conditional expectations due to the independence of $X^{(i)}$, which is
\begin{eqnarray}
P(t,T)&=&\E[F|\mathcal{F}_{t}]=\prod_{i=1}^{d}\E[F^{(i)}|\mathcal{F}_{t}^{(i)}]\notag\\
&=&\prod_{i=1}^{d}\E\left[\exp{\left(-\int_{t}^{T}(X^{(i)}_s)^2\ud s \right)}\bigg|\mathcal{F}_{t}^{(i)}\right],
\label{decomCIR}
\end{eqnarray}
where $\F^{(i)}$ is the natural filtration of Brownian motion $W^{(i)}$. 

Now, we apply Theorem \ref{EFthm} to calculate $P(t,T)$ by an analytical series which only depends on drift $k$ and volatility $\sigma$. To guarantee the convergence of the series, we first present the following lemma.
\begin{lemma}
\label{CONV}
If $F=\exp \Big( -\int_{t}^{T}r_s\, \mathrm{d} s\Big) $ where $r$ is the solution to SDE \eqref{CIR}, for any fixed $t$ and $T$, the condition \eqref{conv} is satisfied.
\end{lemma}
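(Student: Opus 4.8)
The plan is to bound the Malliavin derivatives of $F$ of all orders and show that the prefactor $(T-t)^{2n}/(2^n n!)^2$ kills the growth. Since $F=\prod_{i=1}^d F^{(i)}$ with independent factors, and since $\omega^t$ and the Malliavin derivatives act coordinate-wise, it suffices to work with a single factor $F^{(i)}=\exp\bigl(-\int_t^T (X^{(i)}_s)^2\ud s\bigr)$; I will drop the superscript. The key structural observation is that $Y_s:=X_s$ is a Gaussian process whose Malliavin derivative is deterministic: from \eqref{Xi}, $D_u X_s = e^{-\int_u^s k(r)\ud r}\sigma(u)\,\mathbf 1_{[0,s]}(u)$, so $D_u X_s$ is bounded by a constant $C_1$ depending only on $k,\sigma,T$, and all second- and higher-order Malliavin derivatives of $X_s$ vanish. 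Writing $G:=-\int_t^T X_s^2\ud s$ so that $F=e^G$, the chain and product rules give $D_u F = (D_u G)\,F$ with $D_u G = -2\int_t^T X_s (D_u X_s)\ud s$; iterating, $D_{u_n}\cdots D_{u_1} F$ is $F$ times a finite sum of products of terms, each term being either a first Malliavin derivative $D_u G$ (a linear functional of $X$, hence Gaussian) or a second derivative $D_{u_j}D_{u_i} G = -2\int_t^T (D_{u_j}X_s)(D_{u_i}X_s)\ud s$ (a deterministic quantity bounded by $C_1^2(T-t)$). More precisely, an induction on $n$ shows $D_{u_n}\cdots D_{u_1}F = F\cdot P_n$, where $P_n$ is a polynomial in the Gaussian variables $\{D_{u_i}G\}$ with deterministic coefficients built from the bounded second derivatives; the combinatorics is exactly that of Faà di Bruno / Hermite-type expansions, and the number of terms at level $n$ is controlled by $(2n-1)!!\le 2^n n!$.

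Next I would apply the freezing operator. Since $F^{(i)}\le 1$ always (the exponent is $\le 0$) and this bound is preserved by $\omega^t$ because $\omega^t\circ X_s$ is again a real number and $\omega^t\circ \int_t^T X_s^2\ud s\ge 0$, we get $|\omega^t\circ F|\le 1$. For the polynomial part, $\omega^t$ replaces $X$ by its frozen version; the frozen first derivatives $\omega^t\circ(D_u G)$ are still Gaussian (now linear functionals of $W$ up to time $t$) with variance bounded uniformly, and the frozen second derivatives stay bounded by $C_1^2(T-t)$ — note $\omega^t\circ(D_{u_j}D_{u_i}G)=D_{u_j}D_{u_i}G$ here since that quantity is deterministic. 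Hence
\begin{equation*}
\sup_{u_1,\dots,u_n\in[t,T]}\bigl|\omega^t\circ(D^2_{u_n}\cdots D^2_{u_1}F^{(i)})\bigr|\le \sum_{j} c_{n,j}\,\bigl(\sup_{u\in[t,T]}|\omega^t\circ D_uG|\bigr)^{j},
\end{equation*}
where the sum has at most $C^n (2n)!!$ terms, each coefficient $c_{n,j}$ is bounded by $C^n (T-t)^{?}$ times a binomial factor, and $j\le 2n$. Here $D^2_{u}$ means the second-order Malliavin derivative at $u$, so the total derivative order is $2n$, and the same induction gives $D^2_{u_n}\cdots D^2_{u_1}F = F\cdot Q_{2n}$ with $Q_{2n}$ a polynomial of degree $\le 2n$ in the Gaussians with deterministic bounded coefficients and at most $C^n (4n)!!$ monomials — the precise constant is irrelevant, only that it grows like $C^n n!$.

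The final step is to take the $L^2(\Omega)$ norm. Let $Z:=\sup_{u\in[t,T]}|\omega^t\circ D_uG|$; since $\omega^t\circ D_uG = -2\int_t^T (\omega^t\circ X_s)(D_uX_s)\ud s$ is a continuous Gaussian field in $u$ with uniformly bounded variance, standard Gaussian supremum estimates (Borell–TIS, or simply a crude $L^p$ bound via Fernique) give $\E[Z^{2j}]\le (C_2 j)^{j}$ for all $j$, i.e. $Z$ has Gaussian-type tails. Combining with $|\omega^t\circ F^{(i)}|\le 1$ and the expansion above,
\begin{equation*}
\E\!\left[\Bigl(\sup_{u_1,\dots,u_n}|\omega^t\circ(D^2_{u_n}\cdots D^2_{u_1}F^{(i)})|\Bigr)^{2}\right]\le \bigl(C^n (4n)!!\bigr)^2\,\max_{j\le 2n}\E[Z^{4j}]\le (C_3)^n\,(n!)^{C_4}
\end{equation*}
for constants depending only on $k,\sigma,t,T,d$ (one raises the coordinate-wise bound to the power $d$ to handle $F=\prod F^{(i)}$, which only changes the constants). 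Multiplying by $(T-t)^{2n}/(2^n n!)^2$, the dominant factorial in the numerator is a fixed power of $n!$ while the denominator contributes $(n!)^2$ and a geometric factor; so for the series we actually need the numerator to beat $(n!)^2$. The main obstacle, and the point to be careful about, is precisely this factorial bookkeeping: I must verify that the number of monomials and the size of their coefficients in $Q_{2n}$ grow no faster than $C^n n!$ (not $C^n (2n)!$ or worse), using the fact that each Malliavin differentiation either multiplies by a single Gaussian factor $D_uG$ or lowers a power by splitting off a bounded deterministic factor — so the "effective degree" stays linear in $n$, not quadratic. Granting that, $(T-t)^{2n}C_3^n (n!)^{C_4}/(2^nn!)^2\to 0$ requires $C_4\le 2$; a sharper accounting (each of the $2n$ derivatives acting on $e^G$ yields the Hermite-type bound $\E|Q_{2n}|^2\le C^n (2n)!\,$ paired with the Gaussian moments, but the $(2n)!$ is exactly matched by pairing in the $L^2$ computation, leaving $C^n$) in fact shows the relevant quantity is $O(C^n/( n!)^2)\to 0$. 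I would present this Gaussian-chaos estimate as the technical heart, deferring the explicit constants to the Appendix as the paper indicates.
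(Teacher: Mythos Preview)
Your structural setup is right and matches the paper's: reduce to a single factor $F^{(i)}=e^{G}$ with $G=-\int_t^T X_s^2\ud s$, note that $D_uX_s$ is deterministic and bounded, and that third derivatives of $G$ vanish, so $D^2_{s_n}\cdots D^2_{s_1}F$ is $F$ times a polynomial in the $D_{s_i}G$ with deterministic coefficients. But the proof as written has a real gap, and it is precisely the one you flag yourself: the factorial bookkeeping. Your crude count of ``at most $C^n(4n)!!$ monomials'' combined with Gaussian moment bounds of the form $\E[Z^{2j}]\le (C_2 j)^j$ gives, after squaring and dividing by $(2^n n!)^2$, a quantity that grows like $((2n)!)^2(Cn)^{2n}/(n!)^2$ and certainly does not tend to zero. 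You then write ``granting that $C_4\le 2$'' and ``a sharper accounting in fact shows\ldots'', but this is exactly the substance of the lemma; nothing you have written establishes it.

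There is also a missed simplification that explains why the sharper accounting is tractable. After freezing, $\omega^t\circ X_s$ is a deterministic multiple of the single Gaussian $X_t$ (from \eqref{Xi}), so $\omega^t\circ D_{s_i}G$ is not a genuine Gaussian field in $s_i$ but just a deterministic function of $s_i$ times $X_t$. Borell--TIS is therefore unnecessary, and more importantly the whole expression $\omega^t\circ(D^2_{s_n}\cdots D^2_{s_1}F)$ is $(\omega^t\circ F)$ times a polynomial in the \emph{single} variable $X_t^2$. The paper exploits this by writing
\[
\omega^t\circ(D^2_{s_n}\cdots D^2_{s_1}F)=(\omega^t\circ F)\sum_{m=0}^n G_n^m(s_1,\dots,s_n)\Bigl(\prod_i\sigma(s_i)^2\Bigr)X_t^{2m},
\]
derives explicit recursions for the deterministic functions $G_n^m$ (these are the content of Lemma~\ref{lemmaGnm}), and then proves by a two-layer induction (first on $n$ for $m=0$, then on $m$) the quantitative bound
\[
\sup_{s_1,\dots,s_n\in[t,T]}|G_n^m|\;\le\;\frac{\alpha}{m!}\Bigl(\frac{8}{\beta}\Bigr)^m(4+\beta)^n\,n!\,(T-t)^{n+m}
\]
for suitable constants $\alpha,\beta$. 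It is this bound, with its $n!$ (not $(2n)!$) growth and the explicit $1/m!$, that makes \eqref{conv} go through once you plug in the Gaussian moments $\E[X_t^{4m}]$. Your proposal identifies the right objects but stops exactly where the work begins; to complete it you need either this recursion-and-induction argument or an equivalent combinatorial estimate on the partitions you call ``Fa\`a di Bruno / Hermite-type'', and neither is supplied.
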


Applying the series \eqref{EF} on \eqref{decomCIR}, we obtain
\begin{eqnarray}
\label{EFseries}
\E[F|\F_t]=\prod_{i=1}^{d}\sum_{n=0}^{+\infty }\frac{1}{2^{n}n!}%
\int_{[t,T]^{n}}\omega ^{t}\circ(D_{s_{n}}^{2}\ldots D_{s_{1}}^{2}F^{(i)})\,\mathrm{d}%
s_{n}\ldots \,\mathrm{d}s_{1}. 
\end{eqnarray}
It is worth noticing that we apply \eqref{EF} on each of $\E[F^{(i)}|\F^{(i)}]$ separately, thus the freezing path operator $\omega$ is to freeze the sample paths of each Brownian motion $W^{(i)}$ to a constant after time $t$. To save space, we write $\omega$ without distinguishing them.

To calculate \eqref{EFseries}, the key part is the structure of the iterated Malliavin derivatives acted by the freezing path operator: $\omega ^{t}\circ(D_{s_{n}}^{2}\ldots D_{s_{1}}^{2}F^{(i)})$. We intensively discuss a special case starting from the representation \eqref{rel} instead of the solution of SDE \eqref{CIR}, in which we assume $k(t)\equiv 0$. The case with time-dependent $k=k(t)$ follows similarly. 

For detailed calculation, we prepare some notations for simplicity. Fixed $t$ and $T$, denote
$$F=e^Y\ \ \mbox{with}\ \ Y:=-\int_t^TX_s^2\ud s\ \ \mbox{and}\ \ X_s=x_0+\int_0^s\sigma(u)\ud W_u.$$
We get rid of index $i$ because $\{X^{(i)}\}_i$  are i.i.d.
To calculate $D_{s_{n}}^{2}\dots D_{s_{1}}^{2}F$, the main technical problem arises from the time variables contained in Malliavin derivatives. We list two simple results and some intuitive observations:
\begin{eqnarray*}
D_{s_{1}}^{2}F&=&F\left(\left(D_{s_{1}}Y\right)^{2}+D_{s_{1}}^2Y\right); \\
D_{s_{1}}^{2}D_{s_{2}}^{2}F&=&F\big(2\left(D_{s_{1}s_2}Y\right)^{2}
+\left(D_{s_{1}}Y\right)^{2}\left(D_{s_{2}}Y\right)^{2}+D_{s_{1}}^2Y\left(D_{s_{2}}Y\right)^{2} \\
&&+ D_{s_{2}}^2Y\left(D_{s_{1}}Y\right)^{2}+D_{s_{1}}^2YD_{s_{2}}^2Y+
4D_{s_{1}}YD_{s_{2}}YD_{s_{1}s_2}Y\big).
\end{eqnarray*}
Recall that we write $D_sD_s$ by $D_s^2$. To save space, we write $D_{s_{1}}D_{s_{2}}$ by $D_{s_{1}s_2}$.

\textbf{Observations:}
\begin{enumerate}
\item $D_{s_{n}}^{2}\ldots D_{s_{1}}^{2}F$ equals to
$F$ multiplied by an expression in terms of $Y$;

\item In each term, each of $D_{s_{1}},\ldots,D_{s_{n}}$ appears twice;

\item Taking Malliavin derivatives on $Y$ more than twice is $0$. For example, $D_{s_3}D_{s_2}D_{s_1}Y=0$.
\end{enumerate}
The structure of $D_{s_{n}}^{2}\ldots D_{s_{1}}^{2}F$
can be described by the following lemma.
\begin{lemma}
\label{Lemma2}
$D_{s_{n}}^{2}\ldots D_{s_{1}}^{2}F=F\sum_{\alpha\in{\cal B}_{n}}K_{\alpha}$
where $K_{\alpha}$ is the multiplication of terms that have forms shown in the third column of Table 1, subjected to a certain set partition $\alpha$. ${\cal B}_{n}$ denotes all partitions of set $\{1,1,2,2,...,n,n\}$
satisfying the following rules:

In each subset, there is either only one element, or all elements
are in pairs, or all elements are in pairs except two elements (see the second column in Table 1). For a certain $\alpha$, the correspondence between the subset in partition $\alpha$ and terms in $K_\alpha$ is shown in Table 1. 
\begin{table}
\begin{centering}
\scalebox{0.96}{\begin{tabular}{|c|c|c|}
\hline 
 & Subset & Derivative of $X$\tabularnewline
\hline 
Case 1 & $\{i\}$ & $D_{s_{i}}Y$\tabularnewline
\hline 
Case 2 & $\{i,i\}$ & $D_{s_{i}}^2Y$\tabularnewline
\hline 
Case 3 & $\{i_{1}.i_{1},i_{2},i_{2}\}$ & $2\left(D_{s_{i_{1}}s_{i_{2}}}Y\right)^{2}$\tabularnewline
\hline 
Case 4 & $\{i_{1},i_{1},i_{2},i_{2},...,i_{k},i_{k}\},(k\geq3)$ & $2^{k}D_{s_{i_{1}}s_{i_{2}}}YD_{s_{i_{2}}s_{i_{3}}}Y\ldots D_{s_{i_{k}}s_{i_{1}}}Y$\tabularnewline
\hline 
Case 5 & $\{i_{1},i_{2},i_{2},i_{3},i_{3},...,i_{k-1},i_{k-1},i_{k}\},(k\geq2)$ & $2^{k}D_{s_{i_{1}}s_{i_{2}}}YD_{s_{i_{2}}s_{i_{3}}}Y\ldots D_{s_{i_{k-1}}s_{i_{k}}}Y$\tabularnewline
\hline 
\end{tabular}}
\par\end{centering}
\protect\caption{Relations between subset of $\{1,1,2,2,\ldots,n,n\}$ and derivative
of $X$.}
\end{table}
\end{lemma}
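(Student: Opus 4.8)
The plan is to deduce Lemma~\ref{Lemma2} from two ingredients: a Leibniz-type expansion for iterated Malliavin derivatives of an exponential, and the vanishing of every Malliavin derivative of $Y$ of order at least three. For the first, I would prove by induction on $m$, using only the product and chain rules of Section~2, that for arbitrary $t_1,\ldots,t_m\in[0,T]$
\begin{equation*}
D_{t_1}\cdots D_{t_m}e^{Y}=e^{Y}\sum_{\pi}\prod_{B\in\pi}D_BY ,
\end{equation*}
where $\pi$ ranges over all set partitions of $\{1,\ldots,m\}$ and $D_BY:=D_{t_{j_1}}\cdots D_{t_{j_\ell}}Y$ for a block $B=\{j_1,\ldots,j_\ell\}$. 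The base case $m=1$ is the chain rule; in the inductive step the first Leibniz term equals $e^{Y}(\prod_{B\in\pi}D_BY)\,D_{t_{m+1}}Y$, which appends $\{m+1\}$ as a new singleton, while the second term $e^{Y}\sum_{B'\in\pi}(\prod_{B\neq B'}D_BY)\,D_{t_{m+1}}D_{B'}Y$ inserts $m+1$ into an existing block $B'$, and jointly these range over exactly the partitions of $\{1,\ldots,m+1\}$. The requisite Malliavin smoothness is not an issue here, since $F=e^{Y}\in(0,1]$ and $Y=-\int_t^TX_s^2\ud s$ is a quadratic functional of the Brownian path (it lies in the sum of the Wiener chaoses of order $\le 2$).

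That same remark gives $D_{u_1}D_{u_2}D_{u_3}Y=0$ for all $u_1,u_2,u_3$; one may also see it by the explicit computation $D_uY=-2\sigma(u)\int_{t\vee u}^{T}X_s\ud s$ and hence $D_vD_uY=-2\sigma(u)\sigma(v)\,(T-t\vee u\vee v)$, which is nonrandom. Now apply the displayed formula with $m=2n$ and $(t_1,t_2,\ldots,t_{2n-1},t_{2n})=(s_1,s_1,\ldots,s_n,s_n)$, so that $\{1,\ldots,2n\}$ becomes a set of $2n$ ``slots'', two of them carrying each label $s_i$. Because $D^3Y=0$, every partition $\pi$ having a block of size at least $3$ contributes zero, so only partitions of the $2n$ slots into singletons and pairs remain; on such a block $D_BY$ equals $D_{s_i}Y$ (a singleton), $D_{s_i}^2Y$ (a pair of equally labelled slots), or $D_{s_is_j}Y$ (a pair of differently labelled slots).

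It remains to regroup $\sum_{\pi}\prod_{B\in\pi}D_BY$, the sum being over slot partitions into singletons and pairs, into $\sum_{\alpha\in\mathcal{B}_n}K_\alpha$. To a slot partition $\pi$ I attach the multigraph on the vertex set $\{1,\ldots,n\}$ whose loops are the equally labelled pairs and whose edges are the differently labelled pairs, an unpaired slot being recorded as a Case~1 block but contributing no edge. Since each label owns exactly two slots, every vertex has ``cross-degree'' at most $2$, so the connected components carrying a cross-edge are precisely paths and cycles: a loop is a Case~2 block; a vertex with both slots unpaired gives two Case~1 singletons; a vertex with exactly one unpaired slot gives a Case~1 singleton together with a path endpoint; a cycle of length $2$ is Case~3, a cycle of length $k\ge 3$ is Case~4, and a path on $k$ vertices is Case~5. (Here a ``shape'' $\alpha$ also records the cyclic, resp.\ linear, order of the vertices in each cycle, resp.\ path, which is what enumerates $\mathcal{B}_n$.) Summing $\prod_B D_BY$ over all slot partitions inducing a fixed shape $\alpha$ factorizes over the components, and a direct count shows that the number of slot partitions realizing a given component is exactly the combinatorial weight built into the corresponding Table~1 entry: one binary choice (``which of the vertex's two slots plays which role'') per vertex, hence $2^k$ for a $k$-vertex path and for a $k$-vertex cycle, the length-$2$ cycle being over-counted by a factor $2$ because its two edges are interchangeable, which is why Case~3 carries weight $2$ rather than $4$. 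This yields $\sum_{\pi}\prod_{B\in\pi}D_BY=\sum_{\alpha\in\mathcal{B}_n}K_\alpha$, which is Lemma~\ref{Lemma2}; as a check, for $n=2$ it reproduces the expansion of $D_{s_1}^2D_{s_2}^2F$ displayed above.

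I expect the bookkeeping of the last paragraph — the precise correspondence between slot partitions and the five cases of Table~1, and above all pinning down the weights $2^k$ together with the length-$2$ exception — to be the main obstacle; the exponential Leibniz formula is a routine induction and the vanishing of $D^3Y$ is immediate. Finally, the argument is unchanged for the time-dependent case $k=k(s)\not\equiv0$ alluded to in the text: the only structural inputs are that each $X^{(i)}$ is affine in the Brownian path — so that $D^3Y=0$, which still holds for the representation \eqref{Xi} — together with the i.i.d.\ decomposition \eqref{decomCIR} that lets us suppress the index $i$.
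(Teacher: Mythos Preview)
Your proof is correct but takes a genuinely different route from the paper's. The paper argues by direct induction on $n$: assuming the formula for $D_{s_n}^2\cdots D_{s_1}^2F$, it applies $D_{s_{n+1}}^2$ via the product rule to produce four types of terms, and then identifies each type with a way of enlarging a partition $\alpha\in\mathcal{B}_n$ to one in $\mathcal{B}_{n+1}$ (adjoining $\{\{n+1\},\{n+1\}\}$, adjoining $\{\{n+1,n+1\}\}$, extending an existing Case~5 path at an endpoint, or closing a path/joining two paths into a cycle). Your approach instead first proves the general Fa\`a di Bruno identity $D_{t_1}\cdots D_{t_m}e^Y=e^Y\sum_{\pi}\prod_{B\in\pi}D_BY$ over all set partitions, then specializes to $m=2n$ with doubled labels, invokes $D^3Y=0$ to keep only singletons and pairs, and finally regroups via the path/cycle decomposition of the resulting degree-$\le 2$ multigraph. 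This cleanly separates the three ingredients --- the exponential combinatorics, the quadratic nature of $Y$, and the slot-to-shape bookkeeping --- and makes the provenance of the $2^k$ weights transparent; it also applies verbatim to any $Y$ in the second Wiener chaos, which is exactly what you need for the $k(t)\not\equiv 0$ extension. The paper's induction, by contrast, simultaneously confirms that the class $\mathcal{B}_n$ (with the implicit cyclic/linear ordering on Case~4/5 blocks that you rightly note must be recorded) is closed under the inductive step. The one place where your write-up should be tightened is precisely the point you flag: when you factor over components, the two Case~1 singletons at the endpoints of a Case~5 path must be absorbed into that path's component for the $2^k$ slot count to be correct, whereas an isolated vertex with both slots unpaired is its own component of weight $1$ (two Case~1 blocks), not a degenerate $k=1$ path --- making this distinction explicit completes the argument.
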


Next, we establish the  analytical structure of iterated Malliavin derivatives acted on by the freezing path operator $\omega^t$, that is $\omega^t\circ (D_{s_{n}}^{2}\ldots D_{s_1}^{2}F)$.

\begin{lemma}
\label{lemmaGnm} For all non-negative integer $m$, we have
\begin{equation}
\label{Gnmdef}
\omega ^{t}\circ \left( D_{s_{1}}^{2}\ldots D_{s_{n}}^{2}F\right) :=(\omega
^{t}\circ F)\sum_{m=0}^{n}G_{n}^{m}(s_{1},...,s_{n})\left(\prod_{i=1}^n\sigma(s_i)^2\right)X_t^{2m}
\end{equation}%
where $G_{n}^{m}(s_{1},...,s_{n})$ is an $n$-variable polynomial
which can be computed by the following recurrence formulas:

1. $G_0^0=1$ and $G_{n}^{m}=0$ when $m>n$.
 
2. For $n>0$ and $m=0$,
\begin{eqnarray}
\label{recurGn0}
G_{n}^{0}&=&-2(T-s_{n})G_{n-1}^{0}(\hat s_n)\nonumber \\
&&+\sum_{k=1}^{n-1}\sum_{\{i_{1},i_{2}...i_{k}\}\subset \{1,...,n-1\}}(-1)^{k+1}2^{2k+1}(T-s_{n}\vee
s_{i_{1}})\ldots(T-s_{i_{k}}\vee s_{n})\nonumber\\
&&\ \ \times G_{n-k-1}^{0}(\hat{s}_{i_{1}},...,\hat{s}%
_{i_{k}}, \hat s_n).
\end{eqnarray}

3. For $n\ge m\ge 1$,
\begin{eqnarray}
\label{recurGnm}
G_{n}^{m}&=&\frac{1}{m}%
\sum_{i=1}^{n}4(T-s_{i})^{2}G_{n-1}^{m-1}(\hat{s}_{i})  \notag \\
&&-\frac{1}{m}\sum_{\substack{ \{i,j\}\subset \{1,...,n\}}}2^5(T-s_i)(T-s_i\vee s_j)(T-s_j)G_{n-2}^{m-1}(\hat s_i,\hat s_j)\notag\\
&&+\frac{1}{m}\sum_{k=1}^{n-2}\sum_{\substack{ \{i,j\}\subset \{1,...,n\}
\\ \{i_{1},...,i_{k}\}\subset \{1,...,n\}\backslash \{i,j\}}}%
(-1)^{k+1}2^{2k+5}\nonumber\\
&&\ \ \times (T-s_{i})(T-s_{i}\vee s_{i_{1}})\ldots(T-s_{i_{k}}\vee
s_{j})(T-s_{j}) \nonumber \\
&&\ \ \times G_{n-k-2}^{m-1}(\hat s_i, \hat s_j, \hat{s}_{i_{1}},...,\hat{s}_{i_{k}}).
\end{eqnarray}
$G(\hat s_i)$ denotes the function $G$ containing all variables $s_1,s_2,...,s_n$ except $s_i$.
\end{lemma}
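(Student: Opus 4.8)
The plan is to prove the representation \eqref{Gnmdef} and then the recurrences, both by induction on $n$, using only the explicit low-order Malliavin derivatives of $Y$, the vanishing of its derivatives of order $\ge 3$ (the third observation above), the structure of $D^2_{s_1}\ldots D^2_{s_n}F$ from Lemma \ref{Lemma2}, and the fact that the freezing path operator is an algebra homomorphism leaving $\mathcal{F}_t$-measurable quantities fixed. First I would record the building blocks: from $D_uX_s=\sigma(u)1_{[0,s]}(u)$ one gets, for $t\le s_i,s_j\le T$, $D_{s_i}Y=-2\sigma(s_i)\int_{s_i}^{T}X_u\ud u$, together with the nonrandom quantities $D^2_{s_i}Y=-2\sigma(s_i)^2(T-s_i)$ and $D_{s_i s_j}Y=-2\sigma(s_i)\sigma(s_j)(T-s_i\vee s_j)$; and $\omega^t\circ F=\exp(-(T-t)X_t^2)$ with $\omega^t\circ(D_{s_i}Y)=-2\sigma(s_i)(T-s_i)X_t$, where it is essential that $s_i\ge t$, so that $\omega^t\circ X_u=X_t$ for all $u\in[s_i,T]$.

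For \eqref{Gnmdef} I would apply $\omega^t$ to the identity $D^2_{s_1}\ldots D^2_{s_n}F=F\sum_{\alpha\in{\cal B}_n}K_\alpha$ of Lemma \ref{Lemma2}. Multiplicativity of $\omega^t$ turns each $K_\alpha$ into a nonrandom coefficient, times $\prod_{i=1}^n\sigma(s_i)^2$ (each index occurs exactly twice in $\{1,1,\ldots,n,n\}$ and, by inspection of Table 1, contributes two $\sigma$-factors in every case), times $X_t^{q(\alpha)}$, where $q(\alpha)$ is the number of Case 1 subsets of $\alpha$, since Case 1 is the only case whose frozen factor carries an $X_t$. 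A parity count makes $q(\alpha)$ even: an index occurs ``singly'' precisely as a Case 1 subset or as an exceptional element of a Case 5 subset, so the total number of single occurrences, $q(\alpha)+2\,\#\{\text{Case 5 subsets}\}$, is even because single occurrences match up in pairs sharing the same index. Writing $q(\alpha)=2m$ and collecting gives \eqref{Gnmdef} with $G^m_n$ equal to the sum of the nonrandom coefficients over all $\alpha\in{\cal B}_n$ having exactly $2m$ Case 1 subsets; in particular $G^0_0=1$ and $G^m_n=0$ for $m>n$.

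The recurrences rest on one structural remark: matching the single occurrences by shared index partitions any $\alpha$ counted by $G^m_n$ into $m$ disjoint ``chains'' — each chain being either two Case 1 subsets on one index, or a Case 1 subset, a run of Case 5 subsets joined endpoint to endpoint, and a closing Case 1 subset — together with a chain-free remainder (built from Cases 2, 3, 4 and closed Case 5 loops). Since this decomposition is unique, $G^m_n$ equals $\frac1{m!}$ times the sum over ordered $m$-tuples of disjoint chains of (product of chain weights)$\times$(remainder, which is exactly a $G^0$-quantity on the unused indices); peeling off the first chain and reading the rest as $(m-1)!\,G^{m-1}$ produces the overall $1/m$ and gives \eqref{recurGnm}, the three sums corresponding to chains with $0$, $1$, and $\ge 2$ connecting edges (factors $D_{s_a s_b}Y$), with weights $4(T-s_i)^2$, $-2^5(T-s_i)(T-s_i\vee s_j)(T-s_j)$ and $(-1)^{k+1}2^{2k+5}(T-s_i)(T-s_i\vee s_{i_1})\cdots(T-s_{i_k}\vee s_j)(T-s_j)$, obtained by multiplying the two terminal frozen factors $-2\sigma(\cdot)(T-\cdot)X_t$ by the frozen Case 5 term of the run and then stripping one $X_t^2$ and the $\sigma$-factors of the chain's indices. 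For $m=0$ there are no chains, and instead I would peel off the (now canonically defined) component of $\alpha$ containing the index $s_n$: it is either the doubleton $\{n,n\}$, contributing $-2(T-s_n)G^0_{n-1}(\hat s_n)$, or a cycle through $n$ and $k$ further indices, contributing the double sum with sign $(-1)^{k+1}$ and coefficient $2^{2k+1}$; this is \eqref{recurGn0}.

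The step I expect to be the real obstacle is the coefficient bookkeeping in the last paragraph. Lemma \ref{Lemma2}'s coefficients ($2$ in Case 3, $2^k$ in Cases 4 and 5) record how many slot-level pairings realize a given cyclic or linear pattern, and to check that the chain and cycle weights come out exactly as written one must track simultaneously: the single factor $-2$ contributed by each edge $D_{s_a s_b}Y$ (hence the sign $(-1)^{k+1}$), the independent power of $2$ borne by each Case 4 / Case 5 component, the extra $2$ from passing between ordered and unordered choices of attachment indices (so that the curly-bracket index sets in \eqref{recurGn0}--\eqref{recurGnm} in fact denote ordered tuples), and the extra $1/2$ that makes a two-vertex cycle (Case 3) weigh $2$ rather than $2^2$. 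Verifying that all these multiplicities collapse the chain/cycle decomposition into precisely the displayed sums is the crux of the proof.
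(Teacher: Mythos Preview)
Your approach is essentially the paper's: compute the two frozen building blocks $\omega^t\circ D_{s_i}Y=-2\sigma(s_i)(T-s_i)X_t$ and $\omega^t\circ D_{s_is_j}Y=-2\sigma(s_i)\sigma(s_j)(T-s_i\vee s_j)$, apply $\omega^t$ termwise to the decomposition of Lemma~\ref{Lemma2}, and derive the recurrences by peeling off the subset containing index $n$ (for $m=0$) or one chain terminating in two Case~1 singletons (for $m\ge1$); your ordered-$m$-tuple argument for the factor $1/m$ is in fact more explicit than the paper's rather terse ``Assume $\{i\}\in\alpha$'' treatment. One small simplification for the bookkeeping you flag as the crux: in the partitions $\alpha\in\mathcal{B}_n$ that actually arise from the induction in Lemma~\ref{Lemma2}, each Case~5 endpoint is always matched with a Case~1 singleton, so ``runs of Case~5 subsets joined endpoint to endpoint'' and ``closed Case~5 loops'' never occur---every chain is exactly two Case~1 subsets plus at most one Case~5 subset, and for $m=0$ the component containing $n$ is a single Case~2/3/4 subset.
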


Proofs of Lemma \ref{CONV}-\ref{lemmaGnm} are collected in Appendix.

Lemma \ref{lemmaGnm} gives us a way to analytically calculate the bond price $P(t,T)$.
Note that when $k=0$, $$\omega^t\circ F^{(i)}=\omega^t\circ\exp{\left(-\int_t^T(X_s^{(i)})^2\ud s\right)}=e^{-(T-t)(X_t^{(i)})^2}.$$
Combining \eqref{EFseries} and \eqref{Gnmdef}, we have
\begin{eqnarray}
&&P(t,T)\notag\\
&=&\prod_{i=1}^{d}\sum_{n=0}^{\infty }\frac{1}{2^{n}n!}%
\int_{[t,T]^{n}}\omega ^{t}\circ(D_{s_{n}}^{2}\ldots D_{s_{1}}^{2}F^{(i)})\,\mathrm{d}%
s_{n}\ldots \mathrm{d}s_{1}\notag\\
&=&e^{-(T-t)r_t}\prod_{i=1}^{d}\sum_{n=0}^{\infty }\sum_{m=0}^{n}%
\int_{[t,T]^{n}}\!\!\frac{1}{2^{n}n!}G_{n}^{m}(s_{1},...,s_{n})\prod_{j=1}^n\sigma(s_j)^2(X_t^{(i)})^{2m}\ud%
s_{n}...\ud s_{1}\notag\\
&=&e^{-(T-t)r_t}\prod_{i=1}^{d}\sum_{m=0}^{\infty }A_m(t,T)(X_t^{(i)})^{2m},\label{S1}
\end{eqnarray}
where $A_m(t,T)$, $m=0,1,2,\dots$, is defined by
\begin{equation}
\label{Am}
A_m(t,T):=\sum_{n=m}^{\infty }%
\int_{[t,T]^{n}}\frac{1}{2^{n}n!}G_{n}^{m}(s_{1},...,s_{n})\left(\prod_{j=1}^n\sigma(s_j)^2\right)\,\mathrm{d}%
s_{n}\ldots\mathrm{d}s_{1}.
\end{equation}

Now we present our main result, which is a significantly simplified expression of \eqref{S1}.
\begin{theorem}
\label{MainThm} Following the definition \eqref{Am}, for any integer $m\ge 2$ and $t\leq T$,
\begin{equation}\label{Main1}
A_0(t,T)^{m-1}A_m(t,T)=\frac{1}{m!}A_1(t,T)^m.
\end{equation}
Therefore, the zero-coupon bond price of the ECIR model defined by \eqref{rel} with $k(t)\equiv0$ is, for any $0\leq t\leq T$,
\begin{equation}\label{Main2}
P(t,T)=A_0(t,T)^de^{-\left(T-t-\frac{A_1(t,T)}{A_0(t,T)}\right)r_t}.
\end{equation}
\end{theorem}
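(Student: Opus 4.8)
The plan is to prove the key identity \eqref{Main1} by deriving a closed recursion in $m$ for the generating-function coefficients $A_m(t,T)$, and then to recognize that recursion as the one solved by $A_m = \tfrac{1}{m!}\,(A_1/A_0)^m\,A_0$. First I would fix $t$ and $T$ and set $f(x):=\sum_{m=0}^\infty A_m(t,T)\,x^m$, so that by \eqref{S1} the per-factor conditional expectation is $\E[F^{(i)}\mid \F^{(i)}_t] = e^{-(T-t)(X^{(i)}_t)^2} f\big((X^{(i)}_t)^2\big)$. The claim \eqref{Main1} is exactly the statement that $f(x) = A_0\exp\!\big((A_1/A_0)x\big)$, i.e.\ that $f$ is a single exponential in $x$, hence $P(t,T) = \prod_i e^{-(T-t)(X^{(i)}_t)^2} f((X^{(i)}_t)^2) = A_0^d e^{-(T-t-A_1/A_0)\sum_i (X^{(i)}_t)^2}$, which is \eqref{Main2} after using $r_t=\sum_i (X^{(i)}_t)^2$ from \eqref{rel}. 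So the whole theorem reduces to showing $f$ has this exponential form.

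To get the exponential form I would exploit the probabilistic meaning of $f$ rather than the combinatorics of the $G_n^m$ directly. Write $g(t,x) := \E\big[\exp(-\int_t^T X_s^2\,ds)\,\big|\,X_t^2 = x\big]$ for the Ornstein--Uhlenbeck-type process $X$ with $k\equiv 0$, i.e.\ $X_s = X_t + \int_t^s \sigma(u)\,dW_u$ on $[t,T]$; then by \eqref{S1} we have $g(t,x) = e^{-(T-t)x} f(x)$ when $x=X_t^2$, and the classical affine/Riccati structure for this Gaussian quadratic functional gives $g(t,x) = \hat A(t,T)\,e^{-\hat B(t,T)x}$ for scalar functions $\hat A,\hat B$ solving the time-dependent Riccati system $\hat B' = 1 - 2\sigma^2\hat B^2$ (with $\hat B(T,T)=0$) and $\hat A'/\hat A = \sigma^2 \hat B$ (with $\hat A(T,T)=1$). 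Comparing with $g = e^{-(T-t)x}f(x)$ forces $f(x) = \hat A(t,T)\,e^{(T-t-\hat B(t,T))x}$, which is the desired single exponential; matching the $x^0$ and $x^1$ coefficients then identifies $\hat A = A_0$ and $T-t-\hat B = A_1/A_0$, giving \eqref{Main1} for every $m$ and, simultaneously, \eqref{Main2}.

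Alternatively, if one wants to stay purely within the series framework of Lemma \ref{lemmaGnm} (so that the paper is self-contained and does not invoke the classical quadratic-Gaussian computation), I would instead prove \eqref{Main1} by induction on $m$: substitute the recurrences \eqref{recurGn0}--\eqref{recurGnm} into the definition \eqref{Am}, integrate in $s_n$ (and in the paired variables $s_{i_1},\dots,s_{i_k}$) over $[t,T]$, and reorganize the multiple sums so that $A_m$ is expressed through convolutions of $A_0$ and $A_1$; the weights $(-1)^{k+1}2^{2k+5}(T-s_i)(T-s_i\vee s_{i_1})\cdots(T-s_j)$ are precisely the ones that telescope after integration into products of the ``first-order'' kernel appearing in $A_1$. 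The base cases $m=0,1$ are definitions, and the inductive step reduces \eqref{Main1} to the Cauchy-product identity $\sum_{j=0}^{m}\binom{m}{j}^{-1}\!$-type relation satisfied by $x\mapsto e^{cx}$.

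The main obstacle is the inductive step in the purely combinatorial route: one must show that after integrating the three groups of terms in \eqref{recurGnm} over the relevant simplex, the bookkeeping of which subsets $\{i,j\}$ and chains $\{i_1,\dots,i_k\}$ are "used up" versus "passed down" to $G^{m-1}$ collapses exactly into $A_1 \cdot (\text{lower } A_m)$ with the right rational coefficient; the alternating signs and the $s_i\vee s_j$-type arguments make the combinatorial identity delicate, and it is cleanest to package it as: the exponential generating function $\sum_n \tfrac{1}{2^n n!}\int_{[t,T]^n} G_n^m\prod\sigma(s_j)^2$ satisfies a first-order linear ODE in a suitable auxiliary parameter whose solution is manifestly $\tfrac{1}{m!}A_1^m/A_0^{m-1}$. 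For this reason I expect the Riccati/affine-structure argument of the second paragraph to be the shorter and more transparent proof, with the combinatorial argument relegated to a remark or the appendix.
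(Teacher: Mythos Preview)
Your primary (Riccati/affine) route is correct and would prove the theorem, but it takes a genuinely different path from the paper. The paper's whole point is to derive the bond price from the Dyson series \emph{without} invoking the classical affine structure; it therefore proves \eqref{Main1} by a direct combinatorial argument on the $G_n^m$ recursion. Concretely, the paper first rewrites \eqref{Main1} in the equivalent two-term form $(m+1)A_0A_{m+1}=A_1A_m$, then sets $B_n^m:=\int_{[t,T]^n}\frac{1}{2^nn!}G_n^m\prod_j\sigma(s_j)^2\,ds$ so that $A_m=\sum_{n\ge m}B_n^m$, integrates \eqref{recurGnm} to obtain the linear recurrence $B_n^m=\frac{2}{m}B_{n-1}^{m-1}C+\sum_{k\ge 0}\frac{(-1)^{k+1}2^{k+2}}{m\,k!}B_{n-k-2}^{m-1}C_k$ (with $C,C_k$ explicit deterministic integrals of $\sigma$), and finally expands both $(m+1)A_0A_{m+1}$ and $A_1A_m$ as double sums and checks that the coefficient of each monomial $B_{p_0}^0B_{q_0}^m$ agrees on the two sides. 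Your Riccati argument is shorter and conceptually transparent, but it imports exactly the affine/Feynman--Kac result that the paper is offering an alternative to, so it would undercut the paper's stated contribution (and the subsequent remark that the theorem yields a \emph{new} solution of the Riccati equation would become circular). Your secondary ``purely combinatorial'' sketch is in the right spirit but misses the key simplification actually used: one does not need a full induction on $m$ with telescoping of the alternating chain weights---reducing to the two-term recursion $(m+1)A_0A_{m+1}=A_1A_m$ and matching the $B_{p_0}^0B_{q_0}^m$-coefficients on each side sidesteps the delicate bookkeeping you anticipate.
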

\begin{proof}
Equation \eqref{Main1} is equivalent to 
\begin{equation}\label{Mainequiv}
(m+1)A_0(t,T)A_{m+1}(t,T)=A_1(t,T)A_m(t,T).
\end{equation}
For notation simplicity, we remove time variables $t$ and $T$, and define $B_n^{m}$ for all non-negative integers $m$ and $n$ as
\begin{equation*}
\label{Bmn}
B_n^{m}:= \int_{[t,T]^{n}}\frac{1}{2^{n}n!}G_{n}^{m}(s_{1},...,s_{n})\left(\prod_{j=1}^n\sigma(s_j)^2\right)\,\mathrm{d}%
s_{n}\ldots\mathrm{d}s_{1}.
\end{equation*}
And $C_k$ for all integer $k\ge 0$ as
\begin{equation*}
\label{Cn}
C_k:=\!\!\int_{[t,T]^n}\!\!\!\!\!\!(T-s_{i})(T-s_{i}\vee s_{i_1})...(T-s_{i_k}\vee s_{j})(T-s_{j})\prod_{l=1}^n\sigma(s_{i_l})^2\ud s_{i}\ud s_{i_1}...\ud s_{i_k}\ud s_{j}.
\end{equation*}
By convention, $C_0=\int_{[t,T]^2}(T-s_{i})(T-s_{j})\sigma(s_{i})^2\sigma(s_{j})^2\ud s_{i}\ud s_{j}$ and $C_k=0$ when $k<0$. 
Also denote $C:= \int_{[t,T]}(T-s)^2 \sigma (s)^2 \ud s $. With these notations, it is obvious to see that $A_m = 
\sum_{n=m}^{\infty}B_n^{m}$. Integrating (\ref{recurGnm}) and noting that all integrands are symmetric, we obtain
\begin{equation*}
\label{Bnmrecur}
B_n^{m} = \frac{2}{m}B_{n-1}^{m-1}C +\sum_{k=0}^{n-2}\frac{(-1)^{k+1}2^{k+2}}{mk!}B_{n-k-2}^{m-1}C_k.
\end{equation*}
The left hand side of (\ref{Mainequiv}) can be written as
\begin{eqnarray*}
& &(m+1)A_0A_{m+1}=(m+1)\left( \sum_{p=0}^{\infty} B_p^{0} \right)\left( \sum_{q=m+1}^{\infty} B_{q}^{m+1} \right) \\
&=& \left( \sum_{p=0}^{\infty} B_p^{0} \right)\left( \sum_{q=m+1}^{\infty}\left(2B_{q-1}^{m}C+\sum_{k=0}^{q-2}\frac{(-1)^{k+1}2^{k+2}}{k!}B_{q-k-2}^{m}C_k   \right) \right).
\end{eqnarray*}
For particular $p_0$ and $q_0$, the coefficient of the term $B_{p_0}^0B_{q_0}^m$ is 
\[
2C+\sum_{q=m+1}^{\infty}\frac{(-1)^{q-q_0-1}2^{q-q_{0}}}{(q-q_{0}-2)!}C_{q-q_{0}-2}
\]
which, by the changing of variable $i=q-q_0-2$, is
\[
2C+\sum_{i=m-q_0-1}^{\infty}\frac{(-1)^{i+1}2^{i+2}}{i!}C_{i}.
\]
Similarly, the right hand side of (\ref{Mainequiv}) can be written as
\begin{eqnarray*}
& &A_1A_{m}=\left( \sum_{p=1}^{\infty} B_p^{1} \right)\left( \sum_{q=m}^{\infty} B_{q}^{m} \right) \\
&=&\sum_{p=1}^{\infty} \left(  2B_{p-1}^{0}C+\sum_{k=0}^{p-2}\frac{(-1)^{k+1}2^{k+2}}{k!}B_{p-k-2}^{0}C_k  \right)\left( \sum_{q=m}^{\infty} B_{q}^{m} \right).
\end{eqnarray*}
With the changing of variable $i=p-p_0-2$, the coefficient of $B_{p_0}^0B_{q_0}^m$ is
\[
2C+\sum_{i=-p_0-1}^{\infty}\frac{(-1)^{i+1}2^{i+2}}{i!}C_{i}.
\]
The definition that $G_n^{m}=0$ when $m>n$ implies $q_0\ge m$, which guarantees the left and right hand sides being the same.

To prove \eqref{Main2}, we rewrite \eqref{Main1} as $A_m=A_1^m/(m!A_0^{m-1})$, and substitute it into \eqref{S1} to obtain 
\begin{eqnarray*}
P(t,T)&=&A_0^de^{-(T-t)r_t}\prod_{i=1}^{d}\sum_{m=0}^{\infty }\frac{1}{m!}\left(\frac{A_1}{A_0}(X_t^{(i)})^2\right)^{m}\\
&=&A_0^de^{-(T-t)r_t}\prod_{i=1}^{d}e^{\frac{A_1}{A_0}(X_t^{(i)})^2}=A_0^de^{-\left(T-t-\frac{A_1}{A_0}\right)r_t}.
\end{eqnarray*}\qed 
\end{proof}

Now, we generalize our model to the case of time-dependent drift $k=k(t)$. We use the same notations as
$$F=e^Y,\ \ Y=-\int_t^TX_s^2\ud s,\ \ \ X_s=e^{-\int_0^sk(u)\ud u}\left(x_0+\int_0^se^{\int_0^uk(r)\ud r}\sigma(u)\ud W_u\right).$$
Because of the non-constant drift, we list the two formulas corresponding to \eqref{e1} and \eqref{e2}, which help to generalize Lemma \ref{lemmaGnm}:
\begin{eqnarray}
\label{e11}
\omega^{t}\circ D_{s_{i}}Y&=&-2\left(\int_{s_i}^Te^{-\int_{s_i}^sk(r)\ud r-\int_{t}^sk(r)\ud r}\ud s\right)\sigma(s_i)X_t;\\
\omega^{t}\circ D_{s_{i}s_{j}}Y&=&2\left(\int_{s_{i}\vee s_{j}}^Te^{-\int_{s_i}^sk(r)\ud r-\int_{s_j}^sk(r)\ud r}\ud s\right)\sigma(s_i)\sigma(s_j).\label{e22}
\end{eqnarray}
For simplicity, define
$$\tilde k(a\vee b):=\int_{a\vee b}^Te^{-\int_{a}^sk(r)\ud r-\int_{b}^sk(r)\ud r}\ud s.$$
We generalize the recurrence formulas of $G_n^m$ in the following lemma. 
\begin{lemma}
\label{lemmaGnm2} When $k=k(t)$ is a time dependent nonrandom function, for all non-negative integer $m$, we have  
\begin{equation}
\omega ^{t}\circ \left( D_{s_{n}}^{2}...D_{s_{1}}^{2}F\right) :=(\omega
^{t}\circ F)\sum_{m=0}^{n}G_{n}^{m}(t,s_{1},...,s_{n})\left(\prod_{i=1}^n\sigma(s_i)^2\right)X_t^{2m}
\end{equation}%
where $G_{n}^{m}(t,s_{1},...,s_{n})$ has the following recurrence formula: 

1. $G_0^0=1$ and $G_{n}^{m}=0$ when $m>n$.
 
2. For $n>0$ and $m=0$,
\begin{eqnarray*}
G_{n}^{0}&=&-2\tilde k(s_{n}\vee t)G_{n-1}^{0}(\hat s_{n}) \nonumber \\
&&+\sum_{k=1}^{n-1}\sum_{\{i_{1},i_{2}...i_{k}\}\subset \{1,...,n-1\}}(-1)^{k+1}2^{2k+1}\tilde k(s_{n}\vee s_{i_{1}})\ldots\tilde k(s_{i_k}\vee s_{n})\\
&&\ \ \times G_{n-k-1}^{0}(\hat{s}_{i_{1}},...,\hat{s}%
_{i_{k}}).
\end{eqnarray*}

3. For $n\ge m\ge 1$,
\begin{eqnarray*}
G_{n}^{m}&=&\frac{1}{m}%
\sum_{i=1}^{n}4\tilde k(s_i\vee t)^{2}G_{n-1}^{m-1}(\hat{s}_{i})  \notag \\
&&-\frac{1}{m}\sum_{\{i,j\}\subset \{1,...,n\}}2^5\tilde k(t\vee s_i)\tilde k(s_{i}\vee s_{j})\tilde k(s_{j}\vee t)G_{n-2}^{m-1}(\hat s_i,\hat s_j)\\
&&+\frac{1}{m}\sum_{k=1}^{n-2}\sum_{\substack{ \{i,j\}\subset \{1,...,n\}
\\ \{i_{1},...,i_{k}\}\subset \{1,...,n\}\backslash \{i,j\}}}%
(-1)^{k+1}2^{2k+5}\\
&&\ \ \times\tilde k(t\vee s_i)\tilde k(s_{i}\vee s_{i_{1}})\ldots\tilde k(s_{i_k}\vee s_{j})\tilde k(s_{j}\vee t)\nonumber \\
&&\ \ \times G_{n-k-2}^{m-1}(\hat s_i,\hat s_j,\hat{s}_{i_{1}},...,\hat{s}_{i_{k}}).
\end{eqnarray*}

\end{lemma}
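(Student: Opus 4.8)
The plan is to rerun the proof of Lemma~\ref{lemmaGnm} almost word for word, changing only the ingredients that actually feel the drift. First, the combinatorial skeleton is drift-independent: the expansion $D_{s_n}^2\ldots D_{s_1}^2 F=F\sum_{\alpha\in\mathcal B_n}K_\alpha$ of Lemma~\ref{Lemma2}, together with the subset-to-factor dictionary of Table~1, is proved using only the product and chain rules for the Malliavin derivative together with the identity $D_{s_3}D_{s_2}D_{s_1}Y=0$; and the latter holds for any $k$, since $Y=-\int_t^T X_s^2\ud s$ with $X$ given by the explicit first-chaos formula, so the second Malliavin derivative of $X$ vanishes. Thus $\mathcal B_n$ and the $K_\alpha$ are the same as before.

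Next I would apply the freezing operator. Since $\omega^t$ is multiplicative and fixes the $\mathcal F_t$-measurable variable $X_t$, one has $\omega^t\circ(D_{s_n}^2\ldots D_{s_1}^2F)=(\omega^t\circ F)\sum_{\alpha\in\mathcal B_n}\prod\bigl(\omega^t\circ(\text{factors of }K_\alpha)\bigr)$. Every factor is of the form $D_{s_i}Y$, $D_{s_i}^2Y$ or $D_{s_is_j}Y$, and this is the only place the drift enters: by \eqref{e11}, $\omega^t\circ D_{s_i}Y=-2\,\tilde k(s_i\vee t)\,\sigma(s_i)\,X_t$; by \eqref{e22}, $\omega^t\circ D_{s_is_j}Y=2\,\tilde k(s_i\vee s_j)\,\sigma(s_i)\sigma(s_j)$; and $\omega^t\circ D_{s_i}^2Y$ is the $s_j\to s_i$ specialization of the latter. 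All three are precisely the $k\equiv0$ expressions used in Lemma~\ref{lemmaGnm}, with $T-a\vee b$ replaced by $\tilde k(a\vee b)$. Since every index $i$ appears exactly twice in $\alpha$, the factor $\prod_{i=1}^n\sigma(s_i)^2$ splits off; and since $X_t$ is carried only by the $D_{s_i}Y$-factors, one per Case-1 block, it occurs to a power equal to the number of singleton blocks, which is even ($=2m$) by a short parity argument (each Case-5 path contributes two endpoint slots, and these together with the singleton blocks exhaust the pairs of tokens of the indices split between two blocks). Collecting terms yields \eqref{Gnmdef}, with $G_n^m(t,s_1,\ldots,s_n)$ the sum, over the $\alpha\in\mathcal B_n$ with exactly $2m$ singleton blocks, of the product of the corresponding $\tilde k$-weights and powers of $2$ --- now genuinely depending on $t$ through the factors $\tilde k(\cdot\vee t)$.

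It then remains to reorganize this sum into the stated recurrences, exactly as for Lemma~\ref{lemmaGnm}. For $m=0$ one splits $\mathcal B_n$ according to the block containing the two tokens labelled $n$: a self-pair $\{n,n\}$, contributing $\omega^t\circ D_{s_n}^2Y$ times a partition of the remaining indices (the $G_{n-1}^0(\hat s_n)$ term), or a cycle through $s_n$ and $k$ other indices (the alternating sum with $G_{n-k-1}^0$). For $m\ge1$ one peels off one of the $m$ interchangeable $X_t^2$-pairs --- producing the prefactor $1/m$ --- and classifies the configuration joining its two singleton tokens: both tokens belonging to one index $i$ gives $4\,\tilde k(s_i\vee t)^2\,G_{n-1}^{m-1}(\hat s_i)$, while two singletons tied by a path through $k$ intermediate indices gives the terms with coefficients $(-1)^{k+1}2^{2k+5}$ and $G_{n-k-2}^{m-1}$. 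Because the combinatorial bijection, the multiplicities, the signs and the powers of $2$ are identical to the $k\equiv0$ case and only the weight of an edge $a\vee b$ has changed, the recurrences follow from those of Lemma~\ref{lemmaGnm}. I expect the only genuinely delicate point to be this last reorganization --- verifying that peeling off a pair produces exactly these three families with the correct signs and powers of $2$, the cyclic and path symmetries being where miscounting is easiest --- but since it is already carried out in the proof of Lemma~\ref{lemmaGnm}, the passage to general $k$ reduces to the routine substitution $T-a\vee b\rightsquigarrow\tilde k(a\vee b)$.
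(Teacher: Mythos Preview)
Your proposal is correct and follows exactly the route the paper takes: the paper's own proof consists of the single sentence ``The proof is similar to Lemma~\ref{lemmaGnm}, in the sense of replacing \eqref{e1} and \eqref{e2} by \eqref{e11} and \eqref{e22},'' and you have spelled out precisely this substitution argument --- the drift-independence of the combinatorial skeleton (Lemma~\ref{Lemma2}), the factorwise application of $\omega^t$, and the observation that every occurrence of $T-a\vee b$ in the proof of Lemma~\ref{lemmaGnm} becomes $\tilde k(a\vee b)$ while the signs, powers of $2$, and block-counting arguments are untouched. Your explanation of why the exponent of $X_t$ is even and your remark about the $1/m$ prefactor are useful clarifications that the paper leaves implicit.
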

The proof is similar to Lemma \ref{lemmaGnm}, in the sense of replacing \eqref{e1} and \eqref{e2} by \eqref{e11} and \eqref{e22}.

Now, following the same definition of $A_m(t,T)$ in \eqref{Am} with $G_n^m$ defined in Lemma \ref{lemmaGnm2} and the fact that 
$$\omega^t\circ F^{(i)}=e^{\left(-\int_t^Te^{-\int_t^sk(u)\ud u}\ud s\right)(X_t^{(i)})^2},$$ $P(t,T)$ can be calculated similarly to \eqref{S1}, which is 
\begin{eqnarray*}
P(t,T)=e^{-\int_t^Te^{-\int_t^sk(u)\ud u}\ud s}\prod_{i=1}^{d}\sum_{m=0}^{\infty }A_m(t,T)(X_t^{(i)})^{2m}.\label{S2}
\end{eqnarray*}
Our main result Theorem \ref{MainThm} can be directly generalized to the non-constant drift case.
\begin{theorem}
\label{MainThm2} For any $0\leq t\leq T$, the zero-coupon bond price of the ECIR model driven by SDE \eqref{CIR} with $k(t)$ being a time-dependent nonrandom function is,
\begin{equation}\label{Main3}
P(t,T)=A_0(t,T)^de^{-\left(\int_t^Te^{-\int_t^sk(u)\ud u}\ud s-\frac{A_1(t,T)}{A_0(t,T)}\right)r_t}.
\end{equation}
\end{theorem}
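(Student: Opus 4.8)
The plan is to follow the proof of Theorem \ref{MainThm} essentially line by line. First, Theorem \ref{EFthm} still applies: Lemma \ref{CONV} is stated for the solution of \eqref{CIR} with a general time--dependent drift, so the convergence condition \eqref{conv} holds, and together with Lemma \ref{lemmaGnm2} and the evaluation of $\omega^t\circ F^{(i)}$ quoted above Theorem \ref{MainThm2}, the series
$$P(t,T)=e^{-\int_t^Te^{-\int_t^sk(u)\ud u}\ud s}\prod_{i=1}^{d}\sum_{m=0}^{\infty }A_m(t,T)(X_t^{(i)})^{2m}$$
is valid, where $A_m(t,T)$ is defined by \eqref{Am} with the kernels $G_n^m$ now taken from Lemma \ref{lemmaGnm2}. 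Exactly as in the constant--drift case it then suffices to establish the recurrence \eqref{Mainequiv}, namely $(m+1)A_0A_{m+1}=A_1A_m$ for $m\ge1$; from this, $A_m=A_1^m/(m!\,A_0^{m-1})$ follows by induction, and substituting into the series, summing $\sum_{m\ge0}\frac1{m!}\big((A_1/A_0)(X_t^{(i)})^2\big)^m=e^{(A_1/A_0)(X_t^{(i)})^2}$, and using $\sum_{i=1}^d(X_t^{(i)})^2=r_t$ from \eqref{rel} yields \eqref{Main3} precisely as in the last display of the proof of Theorem \ref{MainThm}.

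To prove \eqref{Mainequiv} I would record first that the recurrences in Lemma \ref{lemmaGnm2} are obtained from those in Lemma \ref{lemmaGnm} by the single substitution $(T-s_a\vee s_b)\mapsto\tilde k(s_a\vee s_b)$, with the convention $s_0:=t$ so that the factor $T-s_n=T-s_n\vee t$ becomes $\tilde k(s_n\vee t)$; all numerical coefficients, signs, and index sums are unchanged. Integrating the recurrence for $G_n^m$ against $\frac1{2^nn!}\prod_j\sigma(s_j)^2$ over $[t,T]^n$, and using that every integrand is symmetric in $s_1,\dots,s_n$, then gives the exact analogue of the $B_n^m$ recurrence from the proof of Theorem \ref{MainThm},
$$B_n^{m}=\frac2m B_{n-1}^{m-1}\tilde C+\sum_{k=0}^{n-2}\frac{(-1)^{k+1}2^{k+2}}{m\,k!}B_{n-k-2}^{m-1}\tilde C_k,$$
where $\tilde C:=\int_{[t,T]}\tilde k(s\vee t)^2\sigma(s)^2\ud s$ and $\tilde C_k$ is the integral of the chain $\tilde k(t\vee s_i)\tilde k(s_i\vee s_{i_1})\cdots\tilde k(s_{i_k}\vee s_j)\tilde k(s_j\vee t)$ against the matching $\sigma^2$ factors (with $\tilde C_k=0$ for $k<0$). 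The key observation, the same one used in the original proof, is that $\tilde C$ and $\tilde C_k$ do not depend on which indices are singled out, so they factor out of every term; the coefficient--matching computation of the proof of Theorem \ref{MainThm}---expand $(m+1)A_0A_{m+1}$ and $A_1A_m$, isolate the coefficient of each product $B_{p_0}^0B_{q_0}^m$, change the summation index, and use $q_0\ge m$ coming from $G_n^m=0$ for $m>n$---then carries over verbatim with $C,C_k$ replaced by $\tilde C,\tilde C_k$, establishing \eqref{Mainequiv}.

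The only step that genuinely requires verification, and therefore the main obstacle, is that the algebra in the proof of Theorem \ref{MainThm} uses the factors $(T-s_a\vee s_b)$ purely through their chain/cycle pattern and the symmetry of the resulting integrals, and never through any special property of the map $s\mapsto T-s$; once this is confirmed, the replacement by $\tilde k(\cdot\vee\cdot)$ is harmless and nothing new is needed. It is also prudent to double--check the two boundary evaluations feeding Lemma \ref{lemmaGnm2}, namely \eqref{e11}--\eqref{e22} for $\omega^t\circ D_{s_i}Y$ and $\omega^t\circ D_{s_is_j}Y$ together with the formula for $\omega^t\circ F^{(i)}$, since these pin down the $\tilde k$ factors; granting them, the proof of Theorem \ref{MainThm2} reduces entirely to the bookkeeping above.
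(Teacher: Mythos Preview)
Your proposal is correct and follows precisely the approach the paper takes: the paper's proof of Theorem \ref{MainThm2} is the single sentence ``The proof follows the same approach as we did for Theorem \ref{MainThm},'' and your write-up spells out exactly that transfer---replace $(T-s_a\vee s_b)$ by $\tilde k(s_a\vee s_b)$ in the recurrences, define $\tilde C,\tilde C_k$ accordingly, and rerun the coefficient-matching argument verbatim to obtain \eqref{Mainequiv} and hence \eqref{Main3}. Your observation that the original argument uses the factors only through their symmetry and chain structure, not through any special property of $s\mapsto T-s$, is exactly the point.
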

The proof follows the same approach as we did for Theorem \ref{MainThm}.

\begin{remark} Our theorems can be applied to solving the time-dependent Riccati equations. From Section 3 of \cite{Mag96}, the bond price is
affine with respect to $r_t$ and satisfies
\begin{equation*}
P(t,T)=A(t,T)e^{-B(t,T)r_t},  \label{generalcase}
\end{equation*}
where $B(t,T)$ solves the time-dependent Riccati equation
\begin{equation}
\frac{\partial B(t,T)}{\partial t}=k(t)B(t,T)+\frac12\sigma (t)^{2}B(t,T)^2-1,  \label{Rica}
\end{equation}%
and $A$ solves $\frac{\partial A(t,T)}{\partial t}=-k(t)\theta(t)B(t,T)$. Theorem \ref{MainThm2} 
allows us to obtain a new solution to the time-dependent Riccati equation (\ref{Rica}) as
\begin{equation*}
B(t,T)=-\frac{A_1(t,T)}{A_0(t,T)}+\int_t^Te^{-\int_t^sk(u)\ud u}\ud s.
\end{equation*}
In the meantime $A(t,T)=A_0(t,T)^d$.
\end{remark}

\section{Numerical Simulations of the Bond Price}
In this section, we give some numerical simulations for the zero-coupon bond price with different volatilities. $\eqref{Main2}$ and \eqref{Main3} can be expanded as
\begin{equation}\label{Main11}
P(t,T)=e^{-\left(T-t\right)r_t}\left(A_0^d+A_0^{d-1}A_1r_t+A_0^{d-1}A_2r_t^2+\ldots\right).
\end{equation}
We apply this expression to numerically calculate the bond price up to a certain order of $T-t$. For the left hand side, we use the Monte-Carlo simulation. For the right hand side, we apply the standard Eular-Maruyama scheme to estimate $r_t$ for fixed $t$ and $T$, and calculate the first few terms of $A_0$ and $A_1$. More specifically, if we assume that $\sigma$ is uniformly bounded by $M>0$, which does not depend on $t$ and $T$, we can write the first few terms of $A_0, A_1$ and $A_2$ as:
\begin{eqnarray*}
A_0&=&1-\int\limits_{t}^{T}(T-s_{1})\sigma(s_{1})^2\, \mathrm{d}
s_{1}\\
&&+\int_{t}^{T}\int_{s_{1}}^{T}\big(2(T-s_{2})^{2}+(T-s_{1})(T-s_{2})\big)%
\sigma(s_{1})^2\sigma(s_{2})^2\, \mathrm{d} s_{2}\, \mathrm{d}
s_{1}\\
&&+O((T-t)^6); \\
A_1&=&\int_t^T 2(T-s_1)^2\sigma(s_1)^2\, \mathrm{d} s_1 \\
&&-\int_t^T \!\!\!\int_{s_1}^T\!\!\!
\big(10(T-s_1)(T-s_2)^2+2(T-s_1)^2(T-s_2)\big)\sigma(s_1)^2\sigma(s_2)^2\, \mathrm{d}
s_2\, \mathrm{d} s_1\\
&&+O((T-t)^7); \\
A_2&=&O((T-t)^6).
\end{eqnarray*} 

In the following examples, we take $k\equiv0, d=1, T=1, t=0.8$ and $r_0=0.5$. 
Based on our selections of $t$ and $T$, we expand our series up to the order $O(T-t)^5\sim 3\times10^{-4}$, i.e., we recombine the first three terms in $A_0$ and the first two terms in $A_1$ in ascending order of $T-t$. We take three volatilities to compare, which are linear decay with $\sigma_1(s)=T-s$ (Figure 1(a)), exponent decay with $\sigma_2(s)=e^{-s}$ (Figure 1(b)), and business cycle with $\sigma_3(s)=\sin s$ (Figure 1(c)), respectively.
\begin{figure}
  \hspace*{-1cm}\includegraphics[width=6in]{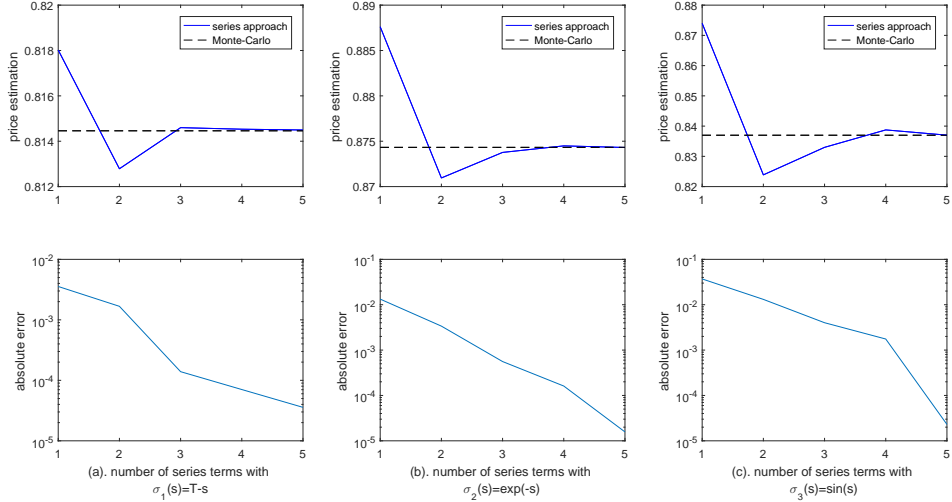}
    \label{F3}
\caption{First five terms of the Dyson type formula.}
\end{figure}
From these figures, it is clear to see that our Dyson type series converges very fast in the first few terms, with the absolute error up to $10^{-5}$, to the true values estimated by the Monte-Carlo simulation. Moreover, the calculation is simple since it only involves Riemann integrals, which can even be calculated manually. On the other hand, our series saves lots of time due to the fact that we only need the current information of the interest rate $r$ at time $t$. In comparison, the Monte-Carlo method requires all future information of $r$ in $(t,T)$ to simulate $-\int_t^Tr_s\ud s$. It is very time consuming to generate a full path of $r_t$ between $t$ and $T$.

\section{Appendix}
\noindent\textit{Proof of Lemma \ref{CONV}}
Based on Lemma \ref{lemmaGnm}, we can bound $G_{n}^{m}$ for all $0\leq m\leq n$. Define $g_{n}^m:=\sup_{s_{1},...,s_{n}\in [t,T]}G_{n}^{m}$ and $\tau=T-t$, then we prove the following bound for $g_{n}^m$: For all $0\leq m\leq n$, there exist two constants $\alpha \geq 1$ and $\beta>2$ which do not depend on $m$, $n$, $t$ and $T$ such that
\begin{equation*}
g_{n}^m\leq \frac{\alpha}{m!}\left(\frac{8}{\beta}\right)^m(4+\beta )^{n}n!\tau^{n+m}.
\end{equation*}

The proof is based on induction on $m$. According to (\ref{recurGn0}), we first build up the recurrence formula for $g_n^0$:%
\begin{equation*}
g_{n}^0\leq\sum_{k=0}^{n-1}2^{2k+1}\tau^{k+1}{\binom{{n-1}}{k}}k!g_{n-k-1}^0.
\end{equation*}%
Now we use induction on $n$ to show that $g_n^0$ satisfies the lemma. First we know $g_0^0=1$. Suppose there exist constants $\alpha$ and $\beta $ such that for all $k\leq
n-1$, 
\begin{equation*}
g_{k}^0\leq \alpha(4+\beta )^{k}k!\tau^{k}.
\end{equation*}%
Then%
\begin{eqnarray*}
g_{n}^0 &\leq &\sum_{k=0}^{n-1}2^{2k+1}\tau^{k+1}{\binom{{n-1}}{k}}%
k!\alpha (4+\beta )^{n-k-1}(n-k-1)!\tau^{n-k-1} \\
&=&2\alpha (n-1)!(4+\beta )^{n-1}\tau^{n}\sum_{k=0}^{n-1}\frac{4^{k}}{%
(4+\beta )^{k}} \\
&\leq &\alpha (4+\beta )^{n}n!\tau^{n},
\end{eqnarray*}%
which proves the case for $m=0$.
Now suppose that $g_n^m$ satisfies the lemma for some $1\leq m \leq n-1$, then based on (\ref{recurGnm}), we build up the recurrence formula for $g_n^{m+1}$:
\begin{equation*}
g_n^{m+1}\leq\frac{1}{m+1}\sum_{k=1}^n \frac{n!2^{2k+1}}{(n-k)!}\tau^{k+1}g_{n-k}^m.
\end{equation*}
By induction assumption,
\begin{eqnarray*}
g_n^{m+1}&\leq &\frac{1}{m+1}\sum_{k=1}^n\frac{2n!4^k}{(n-k)!}\tau^{k+1}\frac{\alpha}{m!}\left(\frac{8}{\beta}\right)^m(4+\beta)^{n-k} (n-k)!\tau^{n+m-k} \\
&=&\frac{\alpha n!(4+\beta)^n}{(m+1)!}\left(\frac{8}{\beta}\right)^m\tau^{n+m+1}\sum_{k=1}^n\frac{2\cdot4^k}{(4+\beta)^k} \\
&\leq&\frac{\alpha n!(4+\beta)^n}{(m+1)!}\left(\frac{8}{\beta}\right)^{m+1}\tau^{n+m+1}.
\end{eqnarray*}
Thus by (\ref{Gnmdef}), the Cauchy-Schwarz inequality, Stirling approximation $n!\sim \sqrt{2\pi}n^{n+\frac{1}{2}}e^{-n}$ and the moment generating function of the geometric Brownian motion $E\left[e^{zW_t}\right]=e^{\frac{z^2}{2}t}$, we obtain,
\begin{eqnarray*}
&&\frac{(T-t)^{2n}}{(2^{n}n!)^{2}}E\left[ \left( \sup_{u_{1},...,u_{n}\in
\lbrack t,T]}\left\vert \omega^t \circ (D_{s_{n}}^{2}...D_{s_{1}}^{2}F)\right\vert \right) ^{2}\right] \\
&\leq&\frac{(T-t)^{2n}n}{(2^{n}n!)^{2}}E[\omega^t \circ F]^2 \left(\sum_{m=1}^n(g_n^m)^2E[W_t^{4m}]\right)\\
&\leq&\frac{(T-t)^{2n}n}{(2^{n}n!)^{2}}E[\omega^t \circ F]^2 \left(\sum_{m=1}^n \frac{\alpha^2 (n!)^2(4+\beta)^{2n}}{(m!)^2}\left(\frac{8}{\beta}\right)^{2m}\tau^{2n+2m}\frac{(4m)!t^{2m}}{(2m)!2^{2m}}\right)\\
&\leq&C\left(\frac{\tau^2(4+\beta)}{2}\right)^{2n}n\sum_{m=1}^n\left(\frac{8t\tau}{\beta}\right)^{2m}
\end{eqnarray*}
where $C$ is a constant which does not depend on $m$ and $n$.
Thus if we fix $t$ and $T$, take $\beta$ being large enough, then the condition (\ref{conv}) holds.
\qed 

\bigskip
\noindent\textit{Proof of Lemma \ref{Lemma2}}
When $n=1$, $\alpha$ can be either $\{\{1\},\{1\}\}$ or $\{\{1,1\}\}$,
which corresponds to $\left(D_{s_{1}}Y\right)^{2}$ or $D_{s_{1}}^2Y$
respectively.  Now we prove the lemma by induction. Assuming true for the case $n$, we calculate the $n+1$ case as
\begin{eqnarray}
\label{LL1}
&&D_{s_{n+1}}^{2}D_{s_{n}}^{2}\ldots D_{s_{1}}^{2}F\notag \\
&=&\sum_{\alpha\in{\cal B}_{n}}D_{s_{n+1}}^{2}\left(FK_{\alpha}\right)\notag \\
&=&\sum_{\alpha\in{\cal B}_{n}}F\left(\left(D_{s_{n+1}}Y\right)^{2}K_{\alpha}
+D_{s_{n+1}}^2YK_{\alpha}+2D_{s_{n+1}}YD_{s_{n+1}}K_{\alpha}
+D_{s_{n+1}}^2K_{\alpha}\right).\notag\\
\end{eqnarray}

For arbitrary $\alpha\in \mathcal{B}_n$, the first term $\left(D_{s_{n+1}}Y\right)^{2}K_{\alpha}$ corresponds
to the partition $\alpha\cup\{\{n+1\},\{n+1\}\}\in\mathcal{B}_{n+1}$, and the second term
$D_{s_{n+1}}^2YK_{\alpha}$ corresponds to the partition $\alpha\cup\{\{n+1,n+1\}\}\in\mathcal{B}_{n+1}$. 
Now we consider the term $2D_{s_{n+1}}YD_{s_{n+1}}K_{\alpha}$. Since
taking Malliavin derivatives on $Y$ more than twice is $0$,
we expand $D_{s_{n+1}}K_{\alpha}$ using chain rule and $D_{s_{n+1}}$
can only act on the term $D_{s_{i}}Y$, which corresponds
to the subset of Case 1 in Table 1. If $\{\{i\},\{i\}\}\subset\alpha$, $2D_{s_{n+1}}YD_{s_{n+1}}K_{\alpha}$ corresponds to the partition $\alpha\cup\{\{n+1\},\{n+1,i\}\}\backslash\{\{i\}\}\in\mathcal{B}_{n+1}$.
Otherwise, there must exist a subset of Case $5$ contains $i$, namely
$\{i,i_{1},i_{1},\ldots,i_{k-1},i_{k-1},i_{k}\}\in\alpha$. In this case, $2D_{s_{n+1}}YD_{s_{n+1}}K_{\alpha}$ corresponds to the partition 
$$\alpha\cup \big\{\{n+1\},\{n+1,i,i,i_{1},i_{1},...,i_{k-1},i_{k-1},i_{k}\}\big\}\backslash\big\{\{i\},\{i,i_{1},i_{1},...,i_{k-1},i_{k-1},i_{k}\}\big\}$$
in $\mathcal{B}_{n+1}$.
Finally we consider the term $D_{s_{n+1}}^2K_{\alpha}$. Similarly, $D_{s_{n+1}}$ must act separately on two terms $D_{s_i}Y$ and 
$D_{s_j}Y$ in $K_\alpha$, which come from Case 1 again. Thus, $D_{s_{n+1}}^2K_{\alpha}$ corresponds to either the partition $\alpha\cup\{\{n+1,n+1,i,i\}\}\in\mathcal{B}_{n+1}$ for $i=j$, or the partition $$\alpha\cup\{\{n+1, n+1,i,i,i_1,i_1...,i_k,i_k,j,j\}\}\backslash\{\{i\},\{i,i_1,i_1,...,i_k,i_k,j\},\{j\}\}$$ 
in $\mathcal{B}_{n+1}$ for $i\ne j$.

On the other hand, for arbitrary $\alpha^{*}\in{\cal B}_{n+1}$, we need to show that it comes from one of the terms on the right hand side of \eqref{LL1}. Essentially, this is converse to the approach used in the previous paragraph. We choose the subsets that contain the element $n+1$ and correspond them to the right hand side of \eqref{LL1}. For example, if $\{\{n+1\},\{n+1\}\}\in\alpha^*$, then it corresponds to $D_{s_{n+1}}^2YK_{\alpha}$, with $\alpha=\alpha^*\backslash\{\{n+1\},\{n+1\}\}\in\mathcal{B}_n$. It is not hard to check other cases similarly. Therefore, we proved that the right hand side of \eqref{LL1} equals to $\sum_{\alpha^*\in\mathcal{B}_{n+1}}FK_{\alpha^*}$, which yields the lemma.\qed

\bigskip
\noindent\textit{Proof of Lemma \ref{lemmaGnm}} 
We first list two expressions which will be repeatedly used:
\begin{eqnarray}
\label{e1}
\omega^{t}\circ D_{s_{i}}Y&=&-2(T-s_i)\sigma(s_i)X_t;\\
\omega^{t}\circ D_{s_{i}s_{j}}Y&=&-2(T-s_{i}\vee s_{j})\sigma(s_i)\sigma(s_j).\label{e2}
\end{eqnarray}
Since $G_{n}^{0}$ denotes the terms with $m=0$ in \eqref{Gnmdef} where $X_t$ is not included, by \eqref{e1}, first order derivative of $Y$ does not contribute in $G_{n}^{0}$.
Thus, to prove (\ref{recurGn0}), we only need to consider the set partition $\alpha$ which contains
the subsets of Cases $2$, $3$ and $4$ from Table 1. We classify $\alpha$
by the cases of subsets containing element $n$.

i). If $\{n,n\}\in\alpha:$
\begin{eqnarray*}
\sum_{\alpha\in{\cal B}_{n}, \ \{n,n\}\in\alpha}\omega^{0}\circ K_{\alpha}&=&\left(\omega^{0}\circ D_{s_{n}s_{n}}Y\right)\left(\sum_{\alpha\in{\cal B}_{n-1}}\omega^{0}\circ K_{\alpha}\right)\\
&=&-2(T-s_{n})\sigma(s_n)^2G_{n-1}^{0}(s_{1},s_{2},...,s_{n-1}).
\end{eqnarray*}

ii). If $\{n,n,i,i\}\in\alpha:$
\begin{eqnarray*}
\sum_{\alpha\in{\cal B}_{n},\{n,n,i,i\}\in\alpha}\omega^{0}\circ K_{\alpha}&=&\left(\omega^{0}\circ2\left(D_{s_{n}s_{i}}Y\right)^{2}\right)\left(\sum_{\alpha\in{\cal B}_{n-2}}\omega^{0}\circ K_{\alpha}\right)\\
&=&2^{3}(T-s_{n}\vee s_{i})^{2}\sigma(s_n)^2\sigma(s_i)^2G_{n-2}^{0}(\hat{s}_{i}).
\end{eqnarray*}

iii) If $\{n,n,i_{1},i_{1},i_{2},i_{2},\ldots,i_{k},i_{k}\}\in\alpha$ with $k\geq2$:
\begin{eqnarray*}
&&\sum_{\alpha\in{\cal B}_{n},\{n,n,i_{1},i_{1},i_{2},i_{2},...,i_{k},i_{k}\}\in\alpha}\omega^{0}\circ K_{\alpha}\\
&=&\left(\omega^{0}\circ2^{k+1}\left(D_{s_{n}s_{i_{1}}}Y\right)\left(D_{s_{i_{1}}s_{i_{2}}}
Y\right)\ldots\left(D_{s_{i_{k}}s_{n}}Y\right)\right)\left(\sum_{\alpha\in{\cal B}_{n-k-1}}\omega^{0}\circ K_{\alpha}\right)\\
&=&(-1)^{k+1}2^{2k+1}(T-s_{n}\vee s_{i_{1}})(T-s_{i_{1}}\vee s_{i_{2}})\ldots(T-s_{i_{k}}\vee s_{n})\\
&&\ \ \ \times \sigma(s_n)^2\prod_{j=1}^k\sigma(s_{i_j})^2G_{n-k-1}^{0}(\hat{s}_{i_{1}},\hat{s}_{i_{2}},...,\hat{s}_{i_{k}},\hat s_n).
\end{eqnarray*}
Combining these three cases, we proved (\ref{recurGn0}).

Now we prove (\ref{recurGnm}). By \eqref{e1} and \eqref{e2}, the terms containing $X_{t}^{2m}$ come from the set partition $\alpha$, in which there are $2m$ subsets of Case $1$ of Table 1. Let ${\cal B}_{n}(m)$ be a subset
of ${\cal B}_{n}$ defined by
\[
{\cal B}_{n}(m):=\left\{ \alpha:\alpha\in{\cal B}_{n},\ K_{\alpha}\ \text{contains }X_{t}^{2m}\right\} .
\]
Then we have
\[
\omega^{t}\circ\left(D_{s_{n}}^{2}\ldots D_{s_{1}}^{2}F\right)=\left(\omega^{t}\circ F\right)\sum_{m=0}^{n}\sum_{\alpha\in{\cal B}_{n}(m)}\omega^{t}\circ K_{\alpha}.
\]
Assume $\{i\}\in\alpha$. We classify $\alpha\in{\cal B}_{n}(m)$ by
the cases of subsets containing the other element $i$.

i) If $\{\{i\},\{i\}\}\subset\alpha$:
\begin{eqnarray*}
\sum_{\substack{\alpha\in{\cal B}_{n}(m),\\ \{\{i\},\{i\}\}\subset\alpha}}\omega^{t}\circ K_{\alpha}&=&\left(\omega^{t}\circ\left(D_{s_{i}}Y\right)^{2}\right)\left(\sum_{\alpha\in{\cal B}_{n-1}(m-1)}\omega^{t}\circ K_{\alpha}\right)\\
&=&4(T-s_{i})^{2}\sigma(s_i)^2G_{n-1}^{m-1}(\hat{s}_{i})X_{t}^{2}.
\end{eqnarray*}

ii) If $\{\{i\},\{i,i_{1},i_{1},i_{2},i_{2},...,i_{k},i_{k},j\},\{j\}\}\subset\alpha$ for $k\geq0$:
\begin{eqnarray*}
&&\sum_{\substack{\alpha\in{\cal B}_{n}(m),\\ \{\{i\},\{i,i_{1},i_{1},...,i_{k},i_{k},j\},\{j\}\}\subset\alpha}}\omega^{t}\circ K_{\alpha}\\
&=&\left(\omega^{t}\circ2^{k+2}D_{s_{i}}YD_{s_{i}s_{i_{1}}}YD_{s_{i_{1}}s_{i_{2}}}Y... D_{s_{i_{k}}s_{j}}YD_{s_{j}}Y\right)\left(\sum_{\alpha\in{\cal B}_{n-k-2}(m-1)}\!\!\!\omega^{t}\circ K_{\alpha}\right) \\
&=&(-1)^{k+1}2^{2k+5}(T-s_{i})(T-s_{i}\vee s_{i_{1}})\ldots(T-s_{i_{k}}\vee s_{j})(T-s_{j})\\
&&\ \ \ \times \sigma(s_i)^2\sigma(s_j)^2\prod_{u=1}^k\sigma(s_{i_u})^2G_{n-k-2}^{m-1}(\hat{s}_i,\hat{s}_j,\hat{s}_{i_{1}},...,\hat{s}_{i_{k}})X_{t}^{2}.
\end{eqnarray*}
Combining these two cases, we proved (\ref{recurGnm}).\qed 

\textbf{Acknowledgements:} The authors would like to thank Professor Henry Schellhorn at
Claremont Graduate University for helpful discussions.






%
%



%
%
\end{document}